\documentclass[10pt]{amsart}
\usepackage{stmaryrd}
\usepackage{mathrsfs}
\usepackage{amsfonts}
\usepackage{amsmath}
\usepackage{amssymb}
\usepackage{latexsym}
\usepackage{graphicx}
\usepackage{subfigure}
\usepackage{indentfirst}
\usepackage{overpic}
\usepackage{enumerate}
\usepackage{caption2}
\usepackage{cite}
\usepackage{bbding}
\usepackage[active]{srcltx}
\usepackage{cases}
\usepackage{algorithm}
\usepackage{multicol}
\usepackage{bm}
\usepackage{tabularx}
\usepackage{multirow}
\usepackage{booktabs}
\usepackage{color,xcolor,tikz}
\newtheorem{theorem}{Theorem}[section]
\newtheorem{lemma}[theorem]{Lemma}
\newtheorem{corollary}[theorem]{Corollary}

\newtheorem{remark}[theorem]{Remark}

\newcommand{\dx}{\,{\rm d}x}
\newcommand{\dd}{\,{\rm d}}

\begin{document}
\title{Some Error Analysis on Virtual Element Methods}

\author{Long Chen}%
\address{Department of Mathematics, University of California at Irvine, Irvine, CA 92697, USA}%
\email{chenlong@math.uci.edu}%

\author{Jianguo Huang$^{\ast}$}%
\thanks{$^{\ast}$Corresponding author.}
\address{School of Mathematical Sciences, and MOE-LSC, Shanghai Jiao Tong University, Shanghai 200240, China}%
\email{jghuang@sjtu.edu.cn}%

\thanks{The first author was supported by the National Science Foundation (NSF) DMS-1418934 and in part by the Sea Poly Project of Beijing Overseas Talents.}
\thanks{The second author was partially supported by NSFC (Grant no. 11571237).}

\begin{abstract}
Some error analysis on virtual element methods (VEMs) including inverse inequalities, norm equivalence, and interpolation error estimates is presented for polygonal meshes each of which admits a virtual quasi-uniform triangulation. The related mesh regularity covers the usual one for theoretical analysis of VEMs and the proofs are only based on mathematical tools well-used in finite element methods technically.
\vskip 0.5cm

\noindent{\it Keywords}: Virtual elements; inverse inequality; norm equivalence; interpolation error estimate.
\vskip 0.3cm

\noindent{\bf AMS Subject Classification}: 65N30, 65N12

\end{abstract}
\maketitle


\section{Introduction}
Since the pioneer work in \cite{Ahmad2013,BeiraoDaVeiga2012,BeiraoDaVeiga2014},
virtual element methods (VEMs) have widely been used for numerically solving various partial differential equations in recent years. Compared with the standard finite element methods (cf. \cite{Brenner.S;Scott.L2008,Ciarlet.P1978}), such methods have several significant advantages: (1) they are very adapted to polygonal/polyhedral meshes, leading to great convenience in mesh generation for problems with complex geometries. For example, in \cite{Chen2017} a simple and efficient interface-fitted polyhedral mesh algorithm is developed and VEM has been successfully applied to the elliptic interface problem. (2) They are very suitable for attacking the problems with high regularity solutions. For instance, it is very difficult to construct usual $H^2$-smooth finite element methods for fourth-order elliptic problems, and hence many nonconforming elements were devised to overcome the difficulty technically (cf. \cite{ShiWang2013}). It is, however, very convenient to construct $H^2$-smooth virtual element methods for this problem (cf. \cite{Brezzi2013}).  Until now, there have developed conforming and nonconforming VEMs for elliptic problems very sophistically (cf. \cite{Ahmad2013,BeiraoDaVeiga2012,BeiraoDaVeiga2016,Brezzi2013,CangianiManziniSutton2017,Chen2017,Dios2014}).


Error estimates for approximation spaces, inverse inequality, and norm equivalence between the norm of a finite element function and its degrees of freedom play fundamental roles in theoretical analysis of finite element methods. So are the virtual element methods.
Such results were stated or implied in the papers \cite{Ahmad2013,BeiraoDaVeiga2012}, though the detailed justifications were not presented.
More recently, in the papers \cite{BeiraoDaVeiga2015} and \cite{CangianiManziniSutton2017}, the inverse estimates (cf. (4.9) and (4.11) in \cite{BeiraoDaVeiga2015}) and  the norm equivalence (cf. Lemma 4.9 in \cite{CangianiManziniSutton2017}) were derived in detail, respectively. All these results were obtained using the so-called generalized scaling argument (cf. \cite{BoffiBrezziFortin2013}), based on the following assumptions on the polygon mesh $\mathcal{T}_h$ in two-dimensional cases:
\begin{enumerate}
\item[{\bf C1}.] There exists a real number $\gamma>0$ such that, for each element $K\in \mathcal {T}_h$, it is star-shaped with respect to a disk of radius $\rho_K\ge\gamma h_K$, where $h_K$ is the diameter of $K$.

\item[{\bf C2}.] There exists a real number $\gamma_1>0$ such that, for each element $K\in \mathcal{T}_h$, the distance between any two vertices of $K$ is $\ge \gamma_1 h_K$.
\end{enumerate}
Using the similar arguments in \cite{Dupont.T;Scott.R1980}, these estimates still hold if any element $K\in \mathcal{T}_h$ is the union of a finite number of polygons satisfying conditions {\bf C1} and {\bf C2}.

The key idea of the generalized scaling argument (still called the scaling argument in \cite{BoffiBrezziFortin2013}) is the use of the compactness argument. To fix ideas, let us show how to prove the inverse estimate
\begin{equation}
\label{inverse1}
\|v\|_{1,K}\le C h_K^{-1}\|v\|_{0,K} \quad \forall\,v\in V_K,
\end{equation}
where $V_K$ is a finite dimensional space of shape functions defined over a polygon $K\in \mathcal{T}_h$, and $C$ is a generic constant independent of the mesh size $h_K$. We first make a scaling transformation and rewrite \eqref{inverse1} as an equivalent estimate over the polygon $\hat{K}$ which is the image of $K$ after the previous transformation. In other words, it suffices to derive the estimate \eqref{inverse1} provided that $h_K=1$. In this case, under the assumptions of {\bf C1} and {\bf C2}, the set $\mathcal{K}$ consisting of all such $K$ can be viewed as a compact set in some topology.   Then, let
\begin{equation}
\label{inverse2}
f(K)=\sup_{v\in V_K} \frac {\|v\|_{1,K}} {\|v\|_{0,K}}.
\end{equation}
If we can prove that $f(K)$ is continuous with respect to $K\in \mathcal{K}$ in the sense of the above topology, then it is evident that $f(K)$ can attain its maximum $C$ over $\mathcal{K}$, leading to the desired estimate $\eqref{inverse1}$ readily; see Lemma \ref{lm:g} for such arguments.

Hence, if we apply the generalized scaling argument to derive the estimate \eqref{inverse1} for virtual element spaces, since $V_K$ is defined with the help of the Laplacian operator (for details see \cite{Ahmad2013,BeiraoDaVeiga2012} or Section 2), we require to show the solution of the Poisson equation defined over $K$ depends on the shape of $K$ continuously. In fact, such results may be obtained rigorously in a very subtle and technical way.

Based on the above comments, in this paper, we aim to derive all the results mentioned above only using mathematical tools well-used in the community of finite element methods, to shed light on theoretical analysis of virtual element methods in another way. We impose the following mesh regularity:
\begin{enumerate}
\item[{\bf A1}.] Every mesh $\mathcal T_h$ consists of a finite number of simple polygons (i.e. open simply connected sets with non-self-intersecting polygonal boundaries).

\item[{\bf A2}.] For each $K\in \mathcal T_h$, there exists a ``virtual triangulation" $\mathcal T_K$ of $K$ such that $\mathcal T_K$ is uniformly shape regular and quasi-uniform. The corresponding mesh size of $\mathcal T_K$ is proportional to $h_K$. Each edge of $K$ is a side of a certain triangle in $\mathcal T_K$.
\end{enumerate}
It is evident that the mesh $\mathcal{T}_h$ fulfilling the conditions {\bf C1} and {\bf C2} naturally satisfy the above conditions. We shall derive some error analysis on VEMs including inverse inequality, norm equivalence, and interpolation error estimates for several types of VEM spaces, under the mesh regularity conditions {\bf A1} and {\bf A2} which cover the usual ones frequently used in the analysis of virtual element methods.

For triangular meshes, one can use affine maps to map an arbitrary triangle to a so-called reference triangle and then work on the reference triangle. Results established on the reference triangle can be pulled back to the original triangle by estimating the Jacobian of the affine map. For polygons, scaling can be still used but not the affine maps. Therefore we cannot work on a reference polygon which does not exist for a family of general shape polygons. Instead we decompose a polygon $K$ into shape regular triangles and use the scaling argument in each triangle.


We will assume {\bf A1} and {\bf A2} hold throughout the paper. Most results are established on a generic polygon for which we always assume {\bf A2} holds. Constants hidden in the $\lesssim$ notation usually depends only on the shape regularity and quasi-uniformity of the auxiliary triangulation $\mathcal T_K$ assumed in {\bf A2}. Moreover, for any two quantities $a$ and $b$, ``$a\eqsim b$" indicates ``$a\lesssim b\lesssim a$". We will also use the standard notation and symbols for Sobolev spaces and their norms and semi-norms; we refer the reader to \cite{Adams1975} for more details.

Denote by $V_K$ a virtual element space (precise definition and variants of VEM spaces can be found in Section 2). With the help of {\bf A2}, we are going to rigorously prove that: for all $v\in V_K$
\begin{itemize}
\item Inverse inequality: $\|v\|_{1,K}\lesssim h_K^{-1}\|v\|_{0,K}$.
\smallskip

\item Norm equivalence:
$
h_K \|\boldsymbol  \chi (v)\|_{l^2} \lesssim \|v\|_{0,K} \lesssim h_K \|\boldsymbol  \chi (v)\|_{l^2},$
where $\boldsymbol  \chi(v)$ is the vector formed by the degrees of freedom of $v$.

\item Norm equivalence of VEM formulation:
\begin{align*}
\|\nabla v\|_{0,K}^2 &\eqsim \|\nabla \Pi_k^{\nabla} v\|_{0,K}^2 + \|\boldsymbol  \chi (v - \Pi_k^{\nabla} v)\|_{l^2}^2,\\
\|\nabla v\|_{0,K}^2 &\eqsim \|\nabla \Pi_k^{\nabla} v\|_{0,K}^2 + \|\boldsymbol  \chi_{\partial K} (v - \Pi_k^0 v)\|_{l^2}^2,
\end{align*}
where $\Pi_k^{\nabla},\Pi_k^0$ are $H^1, L^2$-projection to polynomial spaces, respectively.

\item Interpolation error estimate: if $I_k u\in V_K$ denotes the canonical interpolant defined by d.o.f. of $u$, then
$$
\|u - I_Ku\|_{0,K} + h_{K}|u - I_Ku|_{1,K} \lesssim h_K^{k+1}\|u\|_{k+1,K} \quad \forall u\in H^{k+1}(K).
$$
\end{itemize}

The rest of the paper is organized as follows. The virtual element method is introduced in Section 2 for later requirement. Inverse estimates, norm equivalence, and interpolation error estimates for several types of VEM spaces are derived technically in Sections 3-5, respectively.

\section{Virtual Element Methods}
We consider a two dimensional domain $\Omega$ which is decomposed into a polygon mesh $\mathcal T_h$ satisfying {\bf A1}. Namely each element in $\mathcal T_h$ is a simple polygon and a generic element will be denoted by $K$. We use two dimensional case for the clear illustration and will comment on the generalization to high dimensions afterwards.

To present the main idea, we consider the simplest Poisson equation with zero Dirichlet boundary condition:
$$
-\Delta u = f \text{ in }\Omega, \quad u|_{\partial \Omega} = 0.
$$
The weak formulation is: given an $f\in L^2(\Omega)$, find $u\in H_0^1(\Omega)$ such that
\begin{equation}\label{eq:weakform}
a(u,v) :=(\nabla u, \nabla v) = (f, v)\quad \forall v\in H_0^1(\Omega).
\end{equation}

\subsection{Assumptions on the polygon mesh}
%
%
%
As we mentioned in the introduction, we shall carry out our analysis based on the assumptions {\bf A1} and {\bf A2}. We give more discussion here.

Recall that a triangle is shape regular if there exists a constant $\kappa$ such that the ratio of the diameter of this triangle and the radius of its inscribed circle is bounded by $\kappa$. It is also equivalent to the condition that the minimum angle is bounded below by a positive constant $\theta$. A triangulation $\mathcal T$ is quasi-uniform if any two triangles in the triangulation is of comparable size. Namely there exists a constant $\sigma$, such that $\max_{\tau\in \mathcal T}h_{\tau} \leq \sigma \min_{\tau\in \mathcal T}h_{\tau}$. Uniformity means the constants $\kappa, \theta$ and $\sigma$ are independent of $K$.

By assumption {\bf A2}, the number of triangles of each `virtual triangulation' $\mathcal T_K$ is uniformly bounded by a number $L$ and the size of each triangle is comparable to that of the polygon, i.e. $h_{K}\lesssim h_{\tau}\leq h_K, \; \forall \tau \in \mathcal T_K$. The constants in our inequalities will depend on the shape regularity constant $\kappa$ (or equivalently $\theta$) and the quasi-uniformity constant $\sigma$ (or equivalently the number $L$).

Assumption {\bf A2} is introduced so that we can use estimates for finite elements on triangles. If we assume $K$ is star shaped and each edge is of comparable size, e.g. assumption {\bf C2}, a virtual triangulation can be obtained by connecting vertices of $K$ to the center of the star. But {\bf A2} allows the union of star shaped regions to form very irregular polygons.

Note that to have such a virtual triangulation, we can add more vertexes inside $K$ but not on the boundary $\partial K$.


\subsection{Spaces in Virtual Element Methods}
Let $k,l$ be two positive integers. We introduce the following space on $K$
\begin{equation}\label{VEMspace}
V_{k,l}(K) := \{ v\in H^1(K): v|_{\partial K} \in \mathbb  B_k(\partial K), \Delta v \in \mathbb  P_{l}(K)\},
\end{equation}
where $\mathbb  P_{k}(D)$ is the space of polynomials of degree $\leq k$ on $D$ and conventionally $\mathbb  P_{-1}(D) = 0$, and the space on the boundary
$$
\mathbb  B_k(\partial K):= \{ v\in C^{0}(\partial K): v|_e\in \mathbb  P_{k}(e) \text{ for all edges }e\subset \partial K\}.
$$
Namely restricted to $\partial K$, it is a standard conforming Lagrange element of degree $k$.
The shape function in \eqref{VEMspace} is well defined but the point-wise value of  a function $v\in V_k(K)$ requires solving a PDE inside $K$ and thus considered as implicitly defined not explicitly known. The novelty of VEM is that not the point-wise value but only the degree of freedom (d.o.f.) is enough to produce an accurate and stable numerical method.

To present the d.o.f., we first introduce a scaled monomial $\mathbb  M_{l}(D)$ on a $d$-dimensional domain $D$
\begin{equation}\label{eq:M}
\mathbb  M_{l} (D):= \left \{ \left ( \frac{\boldsymbol  x - \boldsymbol  x_c}{h_D}\right )^{\boldsymbol  s}, |\boldsymbol  s|\leq l\right \}
\end{equation}
with $h_D$ the diameter of $D$ and $\boldsymbol  x_c$ the centroid of $D$. When $D$ is a polygon, $\boldsymbol  x_c$ is the average of coordinates of all vertices of $D$ and thus $|\boldsymbol  x - \boldsymbol  x_c|\leq h_D$ for all $\boldsymbol  x\in D$.

We then introduce the dual space
\begin{equation}\label{VEM}
\mathcal X_{k,l}(K) = {\rm span}\{ \chi_a,  \chi_e^{k-2}, \chi_K^{l}\},
\end{equation}
where the functional vectors are
\begin{itemize}
\item $\chi_a$: the values at the vertices of $K$;
\smallskip
\item $\chi_e^{k-2}$: the moments on edges up to degree $k-2$
$$
\chi_e (v) = |e|^{-1}(m, v)_{e} \quad \forall m\in \mathbb  M_{k-2}(e), \forall \text{ edge } e\subset \partial K;
$$
\item $\chi_K^{l}$: the moments on element $K$ up to degree $l$
$$
\chi_{K}(v) = |K|^{-1}(m, v)_K \quad \forall m\in \mathbb  M_{l}(K).
$$
\end{itemize}

The verification
\begin{equation}\label{unisolvence}
(V_{k,l}(K)) ' = \mathcal X_{k,l}(K),
\end{equation}
is called unisovlence and has been established in~\cite{BeiraoDaVeiga2012}. For the completeness we give a different proof as follows.

It is easy to verify the dimensions matches, i.e., $\dim V_{k,l} = \dim \mathcal X_{k,l}$. Therefore it suffices to verify the uniqueness. That is, for $v\in V_{k,l}$, if $\chi(v) = 0$ for all $\chi \in \mathcal X_{k,l}$, then $v = 0$.

For $v\in V_{k,l}(K)\cap H_0^1(K)$, we apply the integration by parts to conclude
$$
(\nabla v, \nabla v)_K = (v, -\Delta v)_K = (Q_{l}v, -\Delta v)_K,
$$
where $Q_{l}$ is the $L^2$-orthogonal projection onto $\mathbb P_{l}(K)$. The last identity holds due to the requirement $\Delta v\in \mathbb P_{l}(K)$. Now the condition $\chi(v)=0$ for all $\chi \in \mathcal X_{k,l}$ implies that $v|_{\partial K} = 0$ and $Q_{l}v = 0$. Therefore $v\in H_0^1(K)$ and $\|\nabla v\|= 0$ which implies $v=0$.

\begin{remark}\rm
The operator $\Delta$ used in the definition of VEM space \eqref{VEMspace} can be replaced by other operators as long as the space $V_{k,l}(K)$ contains a polynomial space with appropriate degree, which ensures the approximation property. For example, when $K$ is triangulated into a triangulation $\mathcal T_K$, we can choose the standard $k$-th order Lagrange space on $\mathcal T_K$ and impose $\Delta_h v \in \mathbb  P_{l}(K)$ where $\Delta_h$ is the standard Galerkin discretization of $\Delta$ in the standard Lagrangian finite element space $S_{k}(\mathcal T_K)$ based on this virtual triangulation $\mathcal T_K$. From this point of view, VEM can be viewed as a kind of up-scaling. $\Box$
\end{remark}

We relabel the d.o.f. by a single index $i = 1, 2,  \ldots, N_{k,l} := \dim V_{k,l}(K)$. Associated with each d.o.f., there exists a basis $\{\phi_j\}$ of $V_{k,l}(K)$ such that $\chi_i(\phi_j) = \delta_{ij}$ for $i,j = 1, \ldots, N_{k,l}$. Then every function $v\in V_{k,l}(K)$ can be expanded as
$$
v(x) = \sum_{i=1}^{N_{k,l}}\chi_i(v)\phi_i(x)
$$
and in numerical computation it can be identified to the vector $\boldsymbol  v \in \mathbb  R^{N_{k,l}}$ in the form
$$
\boldsymbol  v = (\chi_1(v), \chi_2(v), \ldots, \chi_{N_{k,l}}(v))^{\intercal}.
$$
The isomorphism can be denoted by
$$
\boldsymbol  \chi: V_{k,l}(K) \to \mathbb  R^{N_{k,l}}, \quad \boldsymbol  \chi (v) =  (\chi_1(v), \chi_2(v), \ldots, \chi_{N_{k,l}}(v))^{\intercal}.
$$
The inverse of this isomorphism will be denoted by
$$
\Phi: \mathbb  R^{N_{k,l}} \to V_{k,l}(K), \quad \Phi (\boldsymbol  v) = \boldsymbol  \phi \cdot \boldsymbol  v,
$$
if we treat the basis as a vector $\boldsymbol  \phi = (\phi_1, \phi_2, \ldots, \phi_{N_{k,l}})^{\intercal}$.
%
%

Among different choices of the index $(k,l)$ in $V_{k,l}(K)$, the first VEM space in \cite{BeiraoDaVeiga2012} is
\begin{equation}\label{eq:Vk}
V_k(K) = V_{k,k-2}(K).
\end{equation}
Later on, in order to compute the $L^2$-projection of VEM functions, the authors of~\cite{Ahmad2013} introduced a larger space
\begin{equation}\label{eq:Vkk}
\widetilde{V}_k (K) = V_{k,k}(K)
\end{equation}
and a subspace isomorphism to $V_k(K)$
\begin{equation}\label{eq:Wk}
W_k(K) = \{ w\in \widetilde{V}_k (K) : (w - \Pi_k^{\nabla}w, q^*)_K = 0 \quad \forall q^*\in \mathbb  M_{k}(K)\backslash  \mathbb  M_{k-2}(K)\},
\end{equation}
where the $H^1$-projection $\Pi_k^{\nabla}$ will be defined in the next section.

The spaces $V_k(K)$ and $W_k(K)$ are different but share the same d.o.f.
For the same vector $\boldsymbol  v\in \mathbb  R^{N_{k,k-2}}$, we can then have different functions $\Phi_{V}(\boldsymbol  v)\in V_k(K)$ and $\Phi_{W}(\boldsymbol  v) \in W_k(K)$ and in general $\Phi_{V}(\boldsymbol  v) \neq \Phi_{W}(\boldsymbol  v)$.

Function spaces in each element will be used to design a virtual element space on the whole domain $\Omega$ in the standard way since the function is continuous across the boundary of elements. In particular, given a polygon mesh $\mathcal T_h$ of $\Omega$ and a given integer $k\geq 1$, we define
\begin{align*}
V_h^{k,l}  &= \{ v\in H^1(\Omega): v|_{K} \in V_{k,l}(K)\quad \forall K\subset \mathcal T_h\},\\
V_h  &= \{ v\in H^1(\Omega): v|_{K} \in V_k(K)\quad \forall K\subset \mathcal T_h\},\\
\widetilde{V}_h  &= \{ v\in H^1(\Omega): v|_{K} \in \widetilde{V}_k (K)\quad \forall K\subset \mathcal T_h\},\\
W_h  &= \{ v\in H^1(\Omega): v|_{K} \in W_k (K)\quad \forall K\subset \mathcal T_h\}.
\end{align*}
The d.o.f. can be defined for the global space in the natural way.

For pure diffusion problem, $V_h$ is enough. The function spaces $W_h$ and $\widetilde{V}_h$  will be helpful to deal with low order terms in, e.g., reaction-diffusion problems, and simplify the implementation in three dimensions (cf. \cite{Ahmad2013}).

\subsection{Approximate Stiffness Matrix}
A conforming virtual finite element space $V_h^0 : = V_h \cap H_0^1(\Omega)$ is chosen to discretize \eqref{eq:weakform}. We cannot, however, compute the Galerkin projection of $u$ to $V_h^0$ since the traditional way of computing $a(u_h, v_h)$ using numerical quadrature requires point-wise information of functions and their gradient inside each element. In virtual element methods, only d.o.f is used to assemble an approximate stiffness matrix.

\smallskip

Define a local $H^1$ projection  $\Pi_k^{\nabla}: H^1(K)\to \mathbb  P_k(K)$ as follows: given $v \in H^1(K)$, let $\Pi_k^{\nabla} v \in \mathbb  P_k(K)$ satisfy
$$
(\nabla \Pi_k^{\nabla} v, \nabla p)_K = (\nabla v, \nabla p)_K, \quad \text{for all } p \in \mathbb  P_k(K).
$$
The right hand side can be written as
$$
(\nabla v, \nabla p)_K =  - (v, \Delta p)_K + \langle v, n\cdot \nabla p \rangle_{\partial K}.
$$
When $v$ is in a VEM space with $l\geq k-2$ (either $V_k(K), \widetilde{V}_k(K)$ or $W_k(K)$), it can be computed using d.o.f. of $v$ since, for $p \in \mathbb  P_k(K)$, $\Delta p\in \mathbb  P_{k-2}(K)$ and $\nabla p\cdot n\in \mathbb  P_{k-1}(e), e\in \partial K$. The operator $\Pi_k^{\nabla}$ can be naturally extended to the global space $V_h^{k,l}$ piece-wisely.

\begin{remark}\rm
As $\nabla p\cdot n\in \mathbb  P_{k-1}(e), e\in \partial K$, if we do not enforce the continuity of $v|_{\partial K}$, we could discard the d.o.f. on vertices and use d.o.f. of the edge moments up to order $k-1$ which leads to a non-conforming VEM (cf. \cite{Dios2014}) or weak Galerkin methods (cf. \cite{MuWangYe2015polyredu}).
\end{remark}


As $(\nabla \cdot, \nabla \cdot)$ is only semi-positive definite, a constraint should be imposed to eliminate the constant kernel. When $\Pi^{\nabla}_k$ is applied to a VEM function, we shall choose the constraint
$$
\int_{K} v \dx = \int_K \Pi^{\nabla}_k v \dx, \quad \text{ if } l \geq 0
$$
or in the lowest order case $l = -1$
$$
\int_{\partial K} v \dd s = \int_{\partial K} \Pi^{\nabla}_k v \dd s.
$$
Both constraints can be imposed using the d.o.f. of a VEM function.

For later uses, let us next recall the following Poincar\'e-Friedrichs inequality for $u\in H_0^1(K)$
$$
 \|u\|_{0,K}\leq h_{K}\| \nabla u \|_{0,K}.
$$
and the following version established in~\cite{Brenner.S2003a}.

\begin{lemma}[Poincar\'e-Friedrichs inequality~\cite{Brenner.S2003a}] \label{lm:PF}
We have the following Poincar\'e-Friedrichs inequality
\begin{equation}\label{eq:Poincare}
\| u - \Pi_k^{\nabla} u\|_{0,K}\lesssim h_K\| \nabla (u - \Pi_k^{\nabla} u)\|_{0,K} \quad \forall u\in H^1(K)
\end{equation}
where the constant depends only on the shape regularity constant of the triangulation $\mathcal T_K$.
\end{lemma}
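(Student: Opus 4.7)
The plan is to unpack what Brenner's style Poincar\'e--Friedrichs inequality actually needs, namely (i) a zero-mean condition on $w := u - \Pi_k^{\nabla} u$, and (ii) a Poincar\'e inequality on $K$ with constant depending only on the shape regularity of $\mathcal{T}_K$. Point (i) is immediate from the definition of $\Pi_k^{\nabla}$: the constraint $\int_K \Pi_k^{\nabla} u\,dx = \int_K u\,dx$ (or its boundary analogue when $l=-1$) forces $\int_K w\,dx = 0$. So the real task is the Poincar\'e inequality $\|w\|_{0,K} \lesssim h_K \|\nabla w\|_{0,K}$ on the polygon $K$ with zero mean.

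Since $K$ itself need not be star-shaped, I would \emph{not} appeal to a Bramble--Hilbert style argument on $K$ directly. Instead, following the philosophy already announced in the introduction, I would reduce the inequality to the triangles of the auxiliary triangulation $\mathcal{T}_K$. On each $\tau\in\mathcal{T}_K$, the standard Poincar\'e inequality on a shape regular triangle yields $\|w - \bar w_\tau\|_{0,\tau} \lesssim h_\tau \|\nabla w\|_{0,\tau} \lesssim h_K\|\nabla w\|_{0,\tau}$, where $\bar w_\tau := |\tau|^{-1}\int_\tau w$. Summing gives $\sum_\tau \|w-\bar w_\tau\|_{0,\tau}^2 \lesssim h_K^2\|\nabla w\|_{0,K}^2$, so it remains to control $\sum_\tau |\tau|\,|\bar w_\tau|^2$.

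For this, I would compare the triangle means across shared edges. If $\tau_1,\tau_2\in\mathcal{T}_K$ share an edge $e$, then $(\bar w_{\tau_1}-\bar w_{\tau_2})^2|e|$ splits via the triangle inequality into $\|w-\bar w_{\tau_i}\|_{0,e}^2$ terms, and the multiplicative trace inequality together with the per-triangle Poincar\'e estimate gives $\|w-\bar w_{\tau_i}\|_{0,e}^2 \lesssim h_K \|\nabla w\|_{0,\tau_i}^2$. Since $|e|\eqsim h_K$, this produces
\[
 |\bar w_{\tau_1} - \bar w_{\tau_2}|^2 \lesssim \|\nabla w\|_{0,\tau_1}^2 + \|\nabla w\|_{0,\tau_2}^2.
\]
Because $\mathcal{T}_K$ is a connected triangulation with a uniformly bounded number of triangles $L$, any two triangles can be joined by a chain of edge-adjacent triangles of length $\le L$; iterating the previous bound along such a chain yields $|\bar w_\tau - \bar w_{\tau'}|^2 \lesssim \|\nabla w\|_{0,K}^2$ for all $\tau,\tau'\in\mathcal{T}_K$.

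Finally, I would exploit the zero-mean condition to pin down one triangle mean. From $\sum_\tau |\tau|\bar w_\tau = \int_K w = 0$, fixing any $\tau_0$ and applying Cauchy--Schwarz gives $|\bar w_{\tau_0}|^2 \le |K|^{-1}\sum_\tau|\tau|\,|\bar w_\tau-\bar w_{\tau_0}|^2 \lesssim \|\nabla w\|_{0,K}^2$. Propagating via the chain estimate, every $|\bar w_\tau|^2 \lesssim \|\nabla w\|_{0,K}^2$, and using $|K|\eqsim h_K^2$ gives $\sum_\tau|\tau|\,|\bar w_\tau|^2 \lesssim h_K^2\|\nabla w\|_{0,K}^2$. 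Combining this with the per-triangle Poincar\'e bound closes the argument. The main obstacle I expect is the chaining step: one must confirm that the adjacency graph of $\mathcal{T}_K$ is connected with diameter bounded by $L$ (which follows from {\bf A2} since $K$ is simply connected by {\bf A1} and the number of triangles is uniformly bounded) and that all implied constants depend only on the shape regularity and quasi-uniformity of $\mathcal{T}_K$. For the lowest-order constraint $\int_{\partial K} w\,ds = 0$, the same scheme goes through after replacing the initial triangle Poincar\'e step with an edge-average variant on boundary triangles and using the trace inequality to connect the boundary mean to an interior triangle mean.
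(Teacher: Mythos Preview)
Your argument is correct and self-contained, but it is not the paper's route. The paper does not prove Lemma~\ref{lm:PF} from scratch: it simply rescales $K$ by $\hat{\boldsymbol x}=(\boldsymbol x-\boldsymbol x_c)/h_K$ so that $\hat K$ sits in the unit disk, invokes Brenner's Poincar\'e--Friedrichs inequality \cite{Brenner.S2003a} on $\hat K$ (noting that the rescaled triangulation $\hat{\mathcal T}_{\hat K}$ is still shape regular), and then scales back to recover the factor $h_K$. What you have written is essentially a direct derivation of the relevant special case of Brenner's result: per-triangle Poincar\'e, edge-trace comparison of local means, and a chaining argument across the finitely many triangles of $\mathcal T_K$, followed by the zero-mean identity to anchor one mean. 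This buys you a proof that does not leave the paper's toolbox, at the cost of a page of estimates; the paper's approach is a one-line citation but is not self-contained. One small discrepancy worth flagging: your chaining step makes the constant depend on the number of triangles $L$, i.e.\ on the quasi-uniformity of $\mathcal T_K$, whereas the lemma as stated (and Brenner's original analysis) asserts dependence only on shape regularity. Under the paper's standing assumption {\bf A2} this distinction is immaterial, but if you want to match the sharper claim you would need to refine the chaining so that the accumulated constant is controlled by shape regularity alone.
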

The scaling factor $h_K$ is not presented in the form in~\cite{Brenner.S2003a} but can be easily
obtained by the following scaling argument.
We apply the transformation $\hat {\boldsymbol  x} = (\boldsymbol  x- \boldsymbol  x_c)/h_{K}$ so that $\hat K$,
the image of $K$, is contained in the unit disk. The transformed triangulation $\hat T_{\hat K}$ is
still shape regular so that we can apply results in~\cite{Brenner.S2003a}.
Then scale back to $K$ to get the constant $h_K$.
As pointed out in~\cite{Brenner.S2003a}, the constant depends only on the shape regularity not the quasi-uniformity of the triangulation $\mathcal T_K$.

The first part of the approximate stiffness matrix of the virtual element method will be obtained by the following bilinear form
$$
a( \Pi_k^{\nabla} u, \Pi_k^{\nabla} v).
$$


\subsection{Stabilization}
The approximate bilinear form $a( \Pi_k^{\nabla} u, \Pi_k^{\nabla} v)$ alone will not lead to a stable method. Since $\mathbb  P_k(K) \subset V_k(K)$ and it is a strict subspace except the case $K$ is a triangle, we may have $a( \Pi_k^{\nabla} v, \Pi_k^{\nabla} v) = 0$ when $v\in \ker( \Pi_k^{\nabla})/\mathbb  R$. Namely $a( \Pi_k^{\nabla} \cdot, \Pi_k^{\nabla} \cdot)$ alone cannot define an inner product on $V_h^0$.

A stabilization term should be added to gain the coercivity. To impose the stability while maintain the accuracy, the following assumptions on the element-wise stabilization term $S_{K}(\cdot,\cdot)$ are imposed in VEM (cf. \cite{BeiraoDaVeiga2012}).
\begin{itemize}
\item $k$-consistency: for $p_k\in \mathbb  P_k(K)$
$$
S_{K}(p_k, v) = 0 \quad \forall v\in V_h.
$$
\item stability:
$$
S_{K}(\tilde u, \tilde u) \eqsim (\nabla \tilde u, \nabla \tilde u)_K \quad \forall \tilde u\in (I - \Pi_k^{\nabla})V_h.
$$
\end{itemize}
So VEM is indeed a family of schemes different in the choice of stabilization terms.

We then define
$$
a_h(u,v) := a(\Pi_k^{\nabla} u, \Pi_k^{\nabla} v) + \sum_{K\in \mathcal T_h} S_{K}(u,v).
$$

The $k$-consistency will imply the {\em Patch Test}, i.e., if $u\in \mathbb  P_k(\Omega)$, then
$$
a(u, v_h) = a_h(u, v_h), \quad \text{for all } v_h\in V_h.
$$
The stability will imply
$$
a(u,u) \eqsim a_h(u, u) \quad \text{for all } u\in V_h.
$$
An abstract error estimate of VEM with stabilization satisfying $k$-consistency and stability is given in~\cite{BeiraoDaVeiga2012}.

In the continuous level, a stabilization term can be a scaled $L^2$-inner product
\begin{equation}\label{eq:scaledL2}
h_K^{-2}( u - \Pi_k^{\nabla}u, v - \Pi_k^{\nabla}v )_K.
\end{equation}
The $k$-consistency is obvious as $\Pi_k^{\nabla}$ preserves polynomials of degree $\leq k$. The stability can be proved using an inverse inequality and Poincar\'{e}-Friedrichs type inequality and will be proved rigorously later on.

In the implementation, stabilization \eqref{eq:scaledL2} is realized as
\begin{equation}\label{eq:SVEM}
S_{\boldsymbol  \chi}(u,v) := \boldsymbol  \chi ((I - \Pi_k^{\nabla})u) \cdot \boldsymbol  \chi ((I - \Pi_k^{\nabla})v).
\end{equation}
That is we use the $l^2$-inner product of the d.o.f. vectors to approximate the $L^2$-inner product of the functions involved. The scaling factor $h_K^{-2}$ is absorbed into the definition of d.o.f. through the scaling of the monomials ( cf. \eqref{eq:M}). The norm equivalence of $l^2$ and $L^2$ norm is well known for standard finite element spaces. Rigorous justification for functions in VEM spaces will be established in Section \S \ref{sec:norm} (see also Lemma 4.9 in \cite{CangianiManziniSutton2017}).

\section{Inverse Inequalities}
In this section we shall establish the inverse inequality
$$
\|\nabla v\|_{0,K} \leq C h_K^{-1}\|v\|_{0,K}\quad \text{for all } v\in V_{k,l}(K).
$$
As we mentioned in the introduction, one approach is to apply the fact that all norms are equivalent on the finite dimensional space $V_{k,l}(K)$. How the constant $C$ depends on the shape of $K$ is, however, not clear by a simple scaling argument.

To overcome the above difficulty, we shall use a shape regular and quasi-uniform `virtual triangulation' $\mathcal T_K$ and the fact $\Delta v\in \mathbb P_{l}$. Note that if we modify the definition of virtual element spaces by using the discrete Laplacian operator, then the inverse inequality is trivially true as now the function in VEM space is a finite element function on the virtual triangulation.

We first establish an inverse inequality for polynomial spaces on polygons.
\begin{lemma}[Inverse inequality of polynomial spaces on a polygon]\label{lm:invpoly}
There exists a generic constant depends only on the shape regularity and quasi-uniformity of $\mathcal T_K$ s.t.
 $$
 \| g \|_{0,K} \lesssim h_K^{-i}\| g \|_{-i,K} \quad \text{ for all } g\in \mathbb P_k, i = 1,2.
 $$
\end{lemma}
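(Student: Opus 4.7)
The plan is to reduce the estimate on the polygon $K$ to an estimate on a single triangle $\tau$ of the virtual triangulation $\mathcal T_K$, by a duality argument with a bubble test function supported in $\tau$ and extended by zero to $K$.

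First I would fix one triangle $\tau \in \mathcal T_K$ and use a bubble function $b_\tau$ adapted to the required order: for $i=1$ take $b_\tau = \lambda_1\lambda_2\lambda_3$ (product of barycentric coordinates of $\tau$), and for $i=2$ take $b_\tau = \lambda_1^2\lambda_2^2\lambda_3^2$. On the reference triangle $\hat\tau$, on the finite-dimensional space $\mathbb P_k(\hat\tau)$ both $\hat g \mapsto (\hat b\hat g,\hat g)_{\hat\tau}$ and $\hat g\mapsto \|\hat g\|_{0,\hat\tau}^2$ are positive definite quadratic forms, so the former dominates the latter up to a constant depending only on $k$. Moreover $\|\hat b\hat g\|_{i,\hat\tau} \lesssim \|\hat g\|_{0,\hat\tau}$ on $\hat\tau$ by equivalence of norms. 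Pulling these two facts back to $\tau$ via the affine map and using standard scaling (the triangle $\tau$ is shape regular with $h_\tau \eqsim h_K$ by assumption {\bf A2}) yields
\begin{equation*}
\|g\|_{0,\tau}^{\,2} \lesssim (b_\tau g, g)_\tau,\qquad
\|b_\tau g\|_{i,\tau} \lesssim h_\tau^{-i}\|g\|_{0,\tau}.
\end{equation*}

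Next, because $b_\tau$ (and, in the case $i=2$, also $\nabla b_\tau$) vanishes on $\partial\tau$, the function $v := b_\tau g$ extended by zero outside $\tau$ lies in $H^i_0(K)$, with $\|v\|_{i,K} = \|b_\tau g\|_{i,\tau}$. The definition of the negative norm then gives
\begin{equation*}
\|g\|_{0,\tau}^{\,2} \lesssim (g,v)_K \leq \|g\|_{-i,K}\,\|v\|_{i,K} \lesssim h_K^{-i}\|g\|_{-i,K}\,\|g\|_{0,\tau},
\end{equation*}
so $\|g\|_{0,\tau} \lesssim h_K^{-i}\|g\|_{-i,K}$.

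Finally I would sum the squares over the finitely many triangles of $\mathcal T_K$, whose cardinality is uniformly bounded by the constant $L$ from {\bf A2}, to obtain $\|g\|_{0,K} \lesssim h_K^{-i}\|g\|_{-i,K}$. The only mildly delicate point is the choice of bubble with the correct order of vanishing so that the zero extension really lands in $H^i_0(K)$ (in particular, for $i=2$ a cubic bubble is not enough and one must use the squared product of barycentric coordinates); everything else is a direct scaling argument on a shape-regular triangle, which is exactly the kind of tool the paper wants to rely on.
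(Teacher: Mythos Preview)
Your proof is correct and follows essentially the same strategy as the paper: localize to a single triangle $\tau\in\mathcal T_K$, obtain $\|g\|_{0,\tau}\lesssim h_K^{-i}\|g\|_{-i,K}$ via a test function supported in $\tau$ and extended by zero to $K$, and then sum over the uniformly bounded number of triangles. The only cosmetic difference is that the paper compresses your bubble-function construction into the phrase ``scaling argument'' to get $\|g\|_{0,\tau}\lesssim h_\tau^{-i}\|g\|_{-i,\tau}$ and then invokes the obvious monotonicity $\|g\|_{-i,\tau}\leq\|g\|_{-i,K}$, whereas you carry the explicit test function all the way through.
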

\begin{proof}
Restricted to one triangle $\tau \in \mathcal T_K$, noting $g$ is a polynomial and using the scaling argument, we have $ \|g\|_{0,\tau} \lesssim h_{\tau}^{-i}\|g\|_{-i,\tau}$, for $i=1,2$, where the constant depends only on the shape regularity.
By definition of the dual norm, $\|g\|_{-i,\tau} \leq \|g\|_{-i,K}$. Therefore
$$
 \|g\|_{0,K}^2 =  \sum_{\tau\in \mathcal T_K} \|g\|_{0,\tau}^2 \lesssim  \sum_{\tau\in \mathcal T_K} h_{\tau}^{-2i}\|g\|_{-i, \tau}^2\lesssim  h_K^{-2i}\|g\|_{-i,K}^2,
$$
as required.
\end{proof}

Let $S_{k}(\mathcal T_K)$ be the standard continuous $k$-th Lagrange finite element space on $\mathcal T_K$ and $S_{k}^{0}(\mathcal T_K) := S_{k}(\mathcal T_K) \cap H_0^1(K)$.
Define $Q_K: V_{k,l}(K) \to S_{k}(\mathcal T_K)$ as follows:
\begin{enumerate}
\item $Q_K v|_{\partial K} = v|_{\partial K}$;
\smallskip
\item $(Q_K v, \phi)_K = (v, \phi)_K$ for all $\phi \in S_{k}^{0}(\mathcal T_K)$.
\end{enumerate}
Namely we keep the boundary value and obtain the interior value by the $L^2$-projection. We need the following stability result of $Q_K$.

\begin{lemma}[Weighted stability of $Q_K$]\label{lm:QK}
For any $\epsilon >0$
$$
h_K^{1/2}\|Q_K v\|_{0,\partial K} + \|Q_K v\|_{0,K}\lesssim (1+ \epsilon^{-1}) \|v\|_{0,K} + \epsilon h_K \|\nabla v\|_{0,K}, \quad v\in V_{k,l}(K).
$$
\end{lemma}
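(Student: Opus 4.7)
The plan is to decompose $Q_Kv$ into a boundary-supported piece plus an interior bubble piece, bound each separately on the auxiliary triangulation $\mathcal T_K$ using standard scaling, and then convert the surviving $L^2$-norm on $\partial K$ into interior quantities via a multiplicative trace inequality combined with Young's inequality.

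First I would introduce $v_b \in S_k(\mathcal T_K)$ as the extension of $v|_{\partial K}$ that inherits the Lagrange nodal values of $v$ at every boundary node of $\mathcal T_K$ and vanishes at every interior node, and then set $v_0 := Q_Kv - v_b \in S_k^0(\mathcal T_K)$. Since $v_b$ is supported in the layer of boundary-incident triangles of $\mathcal T_K$, a scaling argument (norm equivalence between a polynomial and its boundary trace on a reference triangle, scaled back using {\bf A2}) gives $\|v_b\|_{0,K} \lesssim h_K^{1/2}\|v\|_{0,\partial K}$. By construction $Q_Kv|_{\partial K} = v|_{\partial K}$, so $\|Q_Kv\|_{0,\partial K} = \|v\|_{0,\partial K}$.

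Next I would exploit the $L^2$-projection property. Testing the defining identity $(Q_Kv,\phi)_K=(v,\phi)_K$ against $\phi = v_0 \in S_k^0(\mathcal T_K)$ yields $\|v_0\|_{0,K}^2 = (v-v_b, v_0)_K$, hence $\|v_0\|_{0,K} \leq \|v\|_{0,K} + \|v_b\|_{0,K}$. The triangle inequality then gives
$$h_K^{1/2}\|Q_Kv\|_{0,\partial K} + \|Q_Kv\|_{0,K} \lesssim \|v\|_{0,K} + h_K^{1/2}\|v\|_{0,\partial K}.$$
So the entire task reduces to controlling $h_K^{1/2}\|v\|_{0,\partial K}$ by the right-hand side of the lemma.

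To finish, I would apply the multiplicative trace inequality proved triangle-by-triangle (each boundary edge of $K$ is an edge of exactly one triangle of $\mathcal T_K$ by {\bf A2}) and summed, giving $h_K\|v\|_{0,\partial K}^2 \lesssim \|v\|_{0,K}^2 + h_K\|v\|_{0,K}\|\nabla v\|_{0,K}$. A Young's inequality of the form $h_K\|v\|_{0,K}\|\nabla v\|_{0,K} \leq \tfrac{1}{2}\epsilon^{-2}\|v\|_{0,K}^2 + \tfrac{1}{2}\epsilon^2 h_K^2\|\nabla v\|_{0,K}^2$ followed by taking square roots (and relabeling $\epsilon$) produces the stated bound. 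The main obstacle is purely bookkeeping: both the zero-interior extension estimate and the multiplicative trace inequality are classical scaling facts on a single shape-regular triangle that just need to be assembled globally via {\bf A2}; no feature specific to the virtual space $V_{k,l}(K)$ beyond the polynomial regularity of its boundary trace plays any role.
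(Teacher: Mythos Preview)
Your proposal is correct and follows essentially the same route as the paper: the same decomposition $Q_Kv = v_b + v_0$ with $v_b$ the zero-interior nodal extension, the same reduction $\|Q_Kv\|_{0,K}\lesssim \|v\|_{0,K}+\|v_b\|_{0,K}$ (the paper obtains this by expanding $\|Q_Kv\|_{0,K}^2$ rather than testing against $v_0$, but the algebra is equivalent), and the same multiplicative trace inequality plus Young's inequality on the boundary triangles of $\mathcal T_K$.
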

\begin{proof}
We split $Q_K v = v_{\partial,h} + v_{0,h}$, where $v_{\partial, h}\in S_{k}(\mathcal T_K)$ is uniquely determined by  $v_{\partial, h}|_{\partial K} = Q_K v |_{\partial K} = v |_{\partial K}$ and vanishes on other nodes of $S_{k}(\mathcal T_K)$. Consequently $v_{0,h} = Q_K v - v_{\partial,h}\in S_{k}^{0}(\mathcal T_K)$. Then
$$
(Q_K v, Q_K v)_K = (Q_K v, v_{\partial,h})_K  + (Q_K v, v_{0,h})_K =: {\rm I}_1 + {\rm I}_2.
$$
The first term can be bound by
$$
{\rm I}_1\leq \|Q_Kv\|_{0,K}\|v_{\partial,h}\|_{0,K}.
$$
By the definition of $Q_K$, we can bound the second term as
$$
{\rm I}_2 = (v, v_{0,h})_K \leq \|v\|_{0,K}\|v_{0,h}\|_{0,K} \leq \|v\|_{0,K}\left (\|v_{\partial,h}\|_{0,K} + \|Q_K v\|\right ).
$$
By Young's inequality, we can then obtain the inequality
\begin{equation}\label{eq:Qhv}
\|Q_K v\|_{0,K}\lesssim \|v\|_{0,K} + \|v_{\partial,h}\|_{0,K}.
\end{equation}

So the key is to estimate the boundary term $\|v_{\partial,h}\|_{0,K}$. For a boundary edge $e$, denote by $\tau_e$ the triangle in $\mathcal T_K$ with $e$ as an edge. By the definition of $v_{\partial,h}$, we have
$$
\|v_{\partial,h}\|_{0,K}^2 = \sum_{e\subset \partial K}\|v_{\partial,h}\|_{0,\tau_e}^2 \lesssim \sum_{e\subset \partial K}\|v_{\partial,h}\|_{0,e}^2h_e = \sum_{e\subset \partial K}\|v\|_{0,e}^2h_e.
$$
In the last step, we use the fact $v_{\partial, h}|_{\partial K} = Q_K v |_{\partial K} = v |_{\partial K}$.

On the other hand, for a bounded domain $\omega$ with Lipschitz boundary, we have the estimate $\|v\|_{0,\partial \omega}^2\lesssim \|v\|_{0,\omega} \|\nabla v\|_{0,\omega}$ for any $v\in H^1(\omega)$ (cf. \cite{Brenner.S;Scott.L2008}). Hence, it follows from the scaling argument and Young's inequality that on each triangle $\tau_e$, there holds the following weighted trace estimate
$$
\|v\|_{0,e}^2h_e \lesssim \epsilon^{-2} \|v\|_{0,\tau_e}^2 + \epsilon^2 h_e^2 \|\nabla v\|_{0,\tau_e}^2.
$$
Summing over $e\subset \partial K$ and taking square root, we obtain
\begin{equation}
\label{add1}
\|v_{\partial,h}\|_{0,K} \lesssim\|Q_Kv\|_{0,\partial K}h_K^{1/2} \lesssim \epsilon^{-1} \|v\|_{0,K} + \epsilon h_K \|\nabla v\|_{0,K},
\end{equation}
and substitute it into \eqref{eq:Qhv} to get the desired inequality for $\|Q_K v\|_{0,K}$.
The desired estimate for $h_K^{1/2}\|Q_K v\|_{0,\partial K}$ follows from \eqref{add1} directly.
\end{proof}

To develop various estimates for a function in VEM spaces, we require to separate it into two functions, related to the moment and the trace of the function, respectively.
\begin{lemma}[An $H^1$-orthogonal decomposition]\label{lm:dec}
For any function $v\in H^1(K)$, we can decompose it as
$$
v =  v_1 + v_2,
$$
with
\begin{enumerate}
\item $v_1\in H^1(K), v_1 |_{\partial K} = v|_{\partial K}, \Delta v_1 = 0$ in $K$,
\smallskip
\item  $v_2 \in H_0^1(K), \Delta v_2 = \Delta v$ in $K$.
\end{enumerate}
Furthermore the decomposition is $H^1$-orthogonal in the sense that
$$
\|\nabla v\|^2 _{0,K}= \|\nabla v_1\|^2_{0,K} + \|\nabla v_2\|^2_{0,K}.
$$
\end{lemma}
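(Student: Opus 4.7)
The plan is to build the decomposition by projecting $v$ onto $H_0^1(K)$ with respect to the Dirichlet inner product, and then read off $v_1$ as the residual. This turns the construction into one application of Lax--Milgram, after which every required property falls out by elementary manipulation.

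More precisely, I would first define $v_2 \in H_0^1(K)$ as the unique solution of the variational problem
\begin{equation*}
(\nabla v_2, \nabla \phi)_K = (\nabla v, \nabla \phi)_K \qquad \forall\,\phi \in H_0^1(K).
\end{equation*}
Existence and uniqueness follow from Lax--Milgram: the bilinear form $(\nabla \cdot,\nabla \cdot)_K$ is coercive on $H_0^1(K)$ by the Poincar\'e--Friedrichs inequality, and the right-hand side is a bounded linear functional on $H_0^1(K)$. Then I would set $v_1 := v - v_2$, so that $v = v_1 + v_2$ and, since $v_2$ has zero trace, $v_1|_{\partial K} = v|_{\partial K}$.

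Next I would verify the PDE conditions. The defining identity rewrites as $(\nabla v_1, \nabla \phi)_K = 0$ for every $\phi \in H_0^1(K)$, which is exactly the weak statement $\Delta v_1 = 0$ in $K$; consequently $\Delta v_2 = \Delta v - \Delta v_1 = \Delta v$ in the distributional sense. This is all one needs for items (1) and (2) of the lemma.

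Finally, the $H^1$-orthogonality is immediate: since $v_2 \in H_0^1(K)$ is an admissible test function, the defining identity applied with $\phi = v_2$ yields $(\nabla v_1, \nabla v_2)_K = (\nabla v_1, \nabla v_2)_K + (\nabla v_2, \nabla v_2)_K - (\nabla v_2, \nabla v_2)_K = 0$ after substituting $v = v_1 + v_2$ into $(\nabla v, \nabla v_2)_K = (\nabla v_2, \nabla v_2)_K$. Expanding $\|\nabla v\|_{0,K}^2 = \|\nabla v_1 + \nabla v_2\|_{0,K}^2$ and using this orthogonality gives the claimed Pythagorean identity. There is no real obstacle here; the only subtlety worth flagging is that $\Delta v_1 = 0$ should be read in the weak/distributional sense, which is exactly what the variational identity provides, so no additional regularity of $\partial K$ is needed.
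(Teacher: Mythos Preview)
Your proof is correct and follows essentially the same route as the paper: define $v_2$ as the $H^1$-projection of $v$ onto $H_0^1(K)$ via $(\nabla v_2,\nabla\phi)_K=(\nabla v,\nabla\phi)_K$ for all $\phi\in H_0^1(K)$, and set $v_1=v-v_2$. The paper's proof is terser (it omits the Lax--Milgram justification and the explicit verification of the Pythagorean identity), but the construction and logic are identical.
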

\begin{proof}
 We simply choose $v_2$ as the $H^1$-projection of $v$ to $H_0^1(K)$, i.e., $v_2 \in H_0^1(K)$ and
 $$
 (\nabla v_2, \nabla \phi)_K  =  (\nabla v, \nabla \phi)_K \quad \text{for all }\phi \in H_0^1(K),
 $$
 and set $v_1 = v - v_2$. Equivalently we can take the trace of $v$ and apply harmonic extension to get $v_1$ and set $v_2 = v - v_1$.
\end{proof}

For the harmonic part, we have the following inequality.
\begin{lemma}[A weighted inequality of the harmonic part of a VEM function]\label{lm:v1}
For $v\in V_{k,l}(K)$, let $v_1\in H^1(K), v_1 |_{\partial K} = v|_{\partial K}, \Delta v_1 = 0$ in $K$. Then for any $\epsilon>0$ we have the following inequality of $v_1$
$$
\|\nabla v_1\|_{0,K} \lesssim h_K^{-1}(1+ \epsilon^{-1}) \|v\|_{0,K} + \epsilon \|\nabla v\|_{0,K}.
$$
\end{lemma}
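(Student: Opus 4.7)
The plan is to use $v_1$'s defining property as a Dirichlet energy minimizer together with the function $Q_K v$ already studied in Lemma \ref{lm:QK} as a convenient competitor.

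Step 1: Since $v_1$ is harmonic in $K$ with $v_1|_{\partial K} = v|_{\partial K}$, Dirichlet's principle (equivalently, integrating by parts against any $\phi\in H_0^1(K)$) gives
\[
\|\nabla v_1\|_{0,K} \;\le\; \|\nabla w\|_{0,K} \qquad \text{for every } w\in H^1(K) \text{ with } w|_{\partial K} = v|_{\partial K}.
\]
I will take $w = Q_K v$. This is legitimate because $v|_{\partial K}\in\mathbb{B}_k(\partial K)$ and, by assumption \textbf{A2}, each edge of $K$ is an edge of some triangle of $\mathcal T_K$, so $v|_{\partial K}$ is exactly the trace of a function in $S_k(\mathcal T_K)$; the definition of $Q_K$ then yields $Q_Kv|_{\partial K}=v|_{\partial K}$.

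Step 2: Now $Q_K v$ is a genuine piecewise polynomial on the shape regular and quasi-uniform triangulation $\mathcal T_K$, so the standard finite element inverse inequality applies triangle by triangle and summed:
\[
\|\nabla Q_K v\|_{0,K} \;\lesssim\; h_K^{-1}\,\|Q_K v\|_{0,K},
\]
where the constant depends only on the shape regularity and quasi-uniformity constants of $\mathcal T_K$ promised by \textbf{A2}.

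Step 3: Combine Steps 1--2 and apply the weighted stability estimate of $Q_K$ from Lemma \ref{lm:QK}:
\[
\|\nabla v_1\|_{0,K} \;\le\; \|\nabla Q_K v\|_{0,K} \;\lesssim\; h_K^{-1}\|Q_K v\|_{0,K} \;\lesssim\; h_K^{-1}(1+\epsilon^{-1})\|v\|_{0,K} + \epsilon\,\|\nabla v\|_{0,K},
\]
which is exactly the stated bound.

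I do not expect a serious obstacle here: the only conceptual point is recognizing that the weighted bound for $Q_Kv$ (already proved) is tailor-made to control the harmonic part via minimality, while the use of the inverse inequality on $\mathcal T_K$ (rather than on $K$ itself) is precisely what assumption \textbf{A2} was introduced for. The small parameter $\epsilon$ is inherited verbatim from Lemma \ref{lm:QK}; no further splitting is needed.
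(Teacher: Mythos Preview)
Your proof is correct and follows essentially the same argument as the paper: use the energy-minimizing property of the harmonic extension, take $Q_Kv\in S_k(\mathcal T_K)$ as the competitor, apply the finite element inverse inequality on $\mathcal T_K$, and finish with the weighted stability of $Q_K$ from Lemma~\ref{lm:QK}. Your write-up in fact corrects a small typo in the paper's displayed chain (where $\|\nabla Q_Kv\|_{0,K}$ is accidentally written as $\|Q_Kv\|_{0,K}$).
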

\begin{proof}
Using the fact $\Delta v_1 =0$ in $K$, we have the property
\begin{equation}\label{eq:min}
\|\nabla v_1\|_{0,K} = \inf_{w\in H^1(K), w|_{\partial K} = v_1|_{\partial K}}\|\nabla w\|_{0,K}.
\end{equation}
Observe that $Q_K v|_{\partial K} = v_1|_{\partial K}$ and $Q_K v\in H^1(K)$. Therefore, from the principle of energy minimization (cf. \eqref{eq:min}), the inverse inequality for functions in $S_{k}(\mathcal T_K)$, and the weighted stability of $Q_K$, it follows that
\begin{align*}
\|\nabla v_1\|_{0,K} \leq \|Q_Kv\|_{0,K} \lesssim h_K^{-1}\|Q_Kv\|_{0,K}\leq h_K^{-1}(1+ \epsilon^{-1}) \|v\|_{0,K} + \epsilon \|\nabla v\|_{0,K}.
\end{align*}
 The proof is completed.
\end{proof}

We now estimate the second part in the decomposition.
\begin{lemma}[Inverse inequality of non-zero moments part]\label{lm:v2}
 For $v\in V_{k,l}(K)$, let $v_2 \in H_0^1(K)$ satisfies $\Delta v_2 = \Delta v$ in $K$. Then
 $$
\|\nabla v_2\|_{0,K} \lesssim h_K^{-1}\|v\|_{0,K}.
$$
\end{lemma}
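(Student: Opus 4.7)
The plan is to exploit two facts: $v_2\in H_0^1(K)$ (so we can freely integrate by parts without boundary terms) and $\Delta v\in \mathbb{P}_l(K)$ (so the polynomial inverse inequality of Lemma~\ref{lm:invpoly} applies to $\Delta v$). The target bound $h_K^{-1}\|v\|_{0,K}$ scales like one derivative of $v$ coming out, so the chain of inequalities should produce exactly one factor of $h_K^{-1}$.

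First I would integrate by parts in the energy identity. Since $v_2\in H_0^1(K)$ and $\Delta v_2=\Delta v\in L^2(K)$,
$$
\|\nabla v_2\|_{0,K}^2 = -(v_2,\Delta v_2)_K = -(v_2,\Delta v)_K \le \|v_2\|_{0,K}\,\|\Delta v\|_{0,K}.
$$
Applying the standard Poincar\'e--Friedrichs inequality $\|v_2\|_{0,K}\le h_K\|\nabla v_2\|_{0,K}$ (available because $v_2\in H_0^1(K)$) and dividing through yields
$$
\|\nabla v_2\|_{0,K}\lesssim h_K\|\Delta v\|_{0,K}.
$$
So it now suffices to show $\|\Delta v\|_{0,K}\lesssim h_K^{-2}\|v\|_{0,K}$.

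Next I would use that $\Delta v\in \mathbb{P}_l(K)$ together with Lemma~\ref{lm:invpoly} (with $i=2$) to trade $L^2$ for $H^{-2}$:
$$
\|\Delta v\|_{0,K}\lesssim h_K^{-2}\|\Delta v\|_{-2,K}.
$$
Finally I would dualize against $H_0^2(K)$: for any $\phi\in H_0^2(K)$, since $\phi$ and $\nabla\phi\cdot n$ vanish on $\partial K$, two integrations by parts give $(\Delta v,\phi)_K=(v,\Delta\phi)_K$, and hence
$$
|(\Delta v,\phi)_K| \le \|v\|_{0,K}\|\Delta\phi\|_{0,K}\le \|v\|_{0,K}\|\phi\|_{2,K}.
$$
Taking the supremum over such $\phi$ gives $\|\Delta v\|_{-2,K}\le \|v\|_{0,K}$. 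Chaining the three estimates produces $\|\nabla v_2\|_{0,K}\lesssim h_K\cdot h_K^{-2}\cdot \|v\|_{0,K}=h_K^{-1}\|v\|_{0,K}$.

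The main subtlety is step two: one must recognize that the natural dual norm to couple with the Poisson structure is $H^{-2}$ rather than $H^{-1}$, so that the subsequent duality pairing against $H_0^2(K)$ kills all boundary contributions of $v$ (about which nothing is assumed) and leaves only the volume term $\|v\|_{0,K}$. The polynomial inverse inequality of Lemma~\ref{lm:invpoly} is what makes this trade-off affordable, and uses the virtual triangulation hypothesis {\bf A2} only implicitly through that lemma.
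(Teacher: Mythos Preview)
Your proof is correct and follows essentially the same route as the paper: integrate by parts using $v_2\in H_0^1(K)$, apply the polynomial inverse inequality (Lemma~\ref{lm:invpoly}) with $i=2$ to $\Delta v\in\mathbb P_l(K)$, bound $\|\Delta v\|_{-2,K}\le\|v\|_{0,K}$, and close with Poincar\'e--Friedrichs for $v_2$. The only difference is the order in which you combine the ingredients, and you spell out the duality argument for $\|\Delta v\|_{-2,K}\le\|v\|_{0,K}$ that the paper states in one line.
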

\begin{proof}
As $v_2\in H_0^1(K)$, we can apply the integration by parts to get
$$
\|\nabla v_2\|_{0,K}^2 = -(\Delta v_2, v_2)_K = -(\Delta v, v_2)_K \leq \|\Delta v\|_{0,K}\|v_2\|_{0,K}.
$$
For $v\in V_{k,l}(K)$, we can apply the inverse inequality for $\Delta v\in \mathbb  P_l$:
$$
\|\Delta v\|_K \lesssim h_K^{-2}\|\Delta v\|_{-2,K} \leq h_K^{-2}\|v\|_{0,K}.
$$
Combining with the Poincar\'e-Friedrichs inequality for $v_2\in H_0^1$: $\|v_2\|_{0,K}\lesssim h_K\|\nabla v_2\|_{0,K}$, we then get
$$
\|\nabla v_2\|_{0,K}^2 \lesssim h_K^{-1}\|v\|\|\nabla v_2\|_{0,K},
$$
and cancel one $\|\nabla v_2\|_{0,K}$ to get the desired result.
\end{proof}

\begin{theorem}[Inverse inequality of a VEM function]\label{th:inverse}There exists a constant $C$ depending only on the shape regularity and quasi-uniformity of $\mathcal T_K$ such that
 $$
\|\nabla v\|_{0,K} \leq C h_K^{-1}\|v\|_{0,K}\quad \text{for all } v\in V_{k,l}(K).
$$
\end{theorem}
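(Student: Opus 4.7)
The plan is to combine the three preceding lemmas in a very direct way. By Lemma \ref{lm:dec}, I would first split $v\in V_{k,l}(K)$ as $v=v_1+v_2$, where $v_1$ is the harmonic extension of the trace $v|_{\partial K}$ and $v_2\in H_0^1(K)$ satisfies $\Delta v_2=\Delta v$ in $K$. The $H^1$-orthogonality provided by the lemma gives
\[
\|\nabla v\|_{0,K}^2 = \|\nabla v_1\|_{0,K}^2 + \|\nabla v_2\|_{0,K}^2,
\]
so it suffices to estimate each piece separately by $h_K^{-1}\|v\|_{0,K}$ (possibly up to an absorbable $\epsilon\|\nabla v\|_{0,K}$ term).

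For the interior (bubble) piece $v_2$, Lemma \ref{lm:v2} already delivers the clean bound $\|\nabla v_2\|_{0,K}\lesssim h_K^{-1}\|v\|_{0,K}$, with no free parameter. So this part requires no further work.

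For the harmonic piece $v_1$, I would apply Lemma \ref{lm:v1}, which yields, for any $\epsilon>0$,
\[
\|\nabla v_1\|_{0,K} \lesssim h_K^{-1}(1+\epsilon^{-1})\|v\|_{0,K} + \epsilon\|\nabla v\|_{0,K}.
\]
Squaring, adding the two estimates, and using $(a+b)^2\le 2a^2+2b^2$, I obtain
\[
\|\nabla v\|_{0,K}^2 \lesssim h_K^{-2}(1+\epsilon^{-2})\|v\|_{0,K}^2 + \epsilon^2\|\nabla v\|_{0,K}^2.
\]
Choosing $\epsilon$ sufficiently small (independently of $h_K$ and $v$) so that the implicit constant times $\epsilon^2$ is at most $1/2$, I can absorb the gradient term on the right into the left-hand side and conclude
\[
\|\nabla v\|_{0,K} \le C h_K^{-1}\|v\|_{0,K},
\]
with $C$ depending only on the shape regularity and quasi-uniformity constants of $\mathcal T_K$.

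There is really no main obstacle remaining at this stage: all the substantive work (the weighted stability of $Q_K$, the harmonic extension minimization, and the dual inverse estimate $\|\Delta v\|_{0,K}\lesssim h_K^{-2}\|v\|_{0,K}$ via Lemma \ref{lm:invpoly}) is already embedded in Lemmas \ref{lm:v1} and \ref{lm:v2}. The only thing to be careful about is the standard $\epsilon$-absorption, which is routine. The heart of the argument, conceptually, is that the orthogonal decomposition reduces the inverse inequality on the implicitly defined VEM space to two tractable problems: one on the finite element space $S_k(\mathcal T_K)$ (through $Q_K$) and one on polynomials (through $\Delta v\in\mathbb P_l$).
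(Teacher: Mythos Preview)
Your proposal is correct and follows essentially the same approach as the paper: decompose $v=v_1+v_2$ via Lemma~\ref{lm:dec}, apply Lemmas~\ref{lm:v1} and~\ref{lm:v2} to the two pieces, and absorb the $\epsilon\|\nabla v\|_{0,K}$ term. The only cosmetic difference is that the paper works linearly with the triangle inequality $\|\nabla v\|_{0,K}\leq \|\nabla v_1\|_{0,K}+\|\nabla v_2\|_{0,K}$ rather than invoking the $H^1$-orthogonality and squaring, but the substance is identical.
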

\begin{proof}
By Lemmas \ref{lm:v1} and \ref{lm:v2}, we have
$$
\|\nabla v\|_{0,K} \leq \|\nabla v_1\|_{0,K} + \|\nabla v_2\|_{0,K} \lesssim h_K^{-1}\|v\|_{0,K} + \epsilon \|\nabla v\|_{0,K}.
$$
Choose $\epsilon$ small enough and absorb the term $\epsilon \|\nabla v\|_{0,K}$ to the left hand side to get the desired inverse inequality.
\end{proof}

As an application of the inverse inequality, we prove the $L^2$-stability of the projection $Q_K$ and $\Pi_k^{\nabla}$ restricted to VEM spaces.

\begin{corollary}[$L^2$-stability of $Q_K$]\label{cor:QK}
The operator $Q_K: V_{k,l}(K) \to S_{k}(\mathcal T_K)$ is $L^2$-stable, i.e.,
$$
\|Q_Kv\|_{0,K} \lesssim \|v\|_{0,K}, \quad \text{for all } v\in V_{k,l}(K).
$$
\end{corollary}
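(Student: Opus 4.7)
The plan is to combine the weighted stability estimate for $Q_K$ from Lemma \ref{lm:QK} with the inverse inequality from Theorem \ref{th:inverse}. The weighted stability bound reads
$$
\|Q_K v\|_{0,K}\lesssim (1+ \epsilon^{-1}) \|v\|_{0,K} + \epsilon\, h_K \|\nabla v\|_{0,K},
$$
and the key observation is that for $v\in V_{k,l}(K)$, the factor $h_K\|\nabla v\|_{0,K}$ on the right is no longer an independent quantity — the newly established inverse inequality lets us control it by $\|v\|_{0,K}$.

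Concretely, I would apply Theorem \ref{th:inverse} to bound $h_K\|\nabla v\|_{0,K}\lesssim \|v\|_{0,K}$, substitute into the weighted stability estimate, and obtain
$$
\|Q_K v\|_{0,K}\lesssim \bigl(1+ \epsilon^{-1} + \epsilon\bigr) \|v\|_{0,K}.
$$
Then fixing any convenient value (for instance $\epsilon = 1$) yields a constant depending only on the shape regularity and quasi-uniformity of $\mathcal T_K$, which is exactly the claimed $L^2$-stability.

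There is no real obstacle here; the corollary is essentially the statement that once the $h_K\|\nabla v\|$ term has been absorbed via the inverse inequality, the $\epsilon$-dependent bound in Lemma \ref{lm:QK} collapses to a clean uniform bound. The only minor point to mention is that the same argument also produces the boundary trace bound $h_K^{1/2}\|Q_K v\|_{0,\partial K}\lesssim \|v\|_{0,K}$ for free, since that term appeared on the left-hand side of Lemma \ref{lm:QK} as well; this could be recorded as a remark if it is needed later.
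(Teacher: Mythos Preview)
Your proposal is correct and matches the paper's own proof essentially verbatim: the paper simply says to apply the inverse inequality to bound $h_K\|\nabla v\|_{0,K}\lesssim \|v\|_{0,K}$ in Lemma~\ref{lm:QK}. Your additional remark about the boundary trace bound $h_K^{1/2}\|Q_K v\|_{0,\partial K}\lesssim \|v\|_{0,K}$ is a nice observation (and indeed is used later in the lower-bound part of the norm equivalence).
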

\begin{proof}
Simply apply the inverse inequality to bound $h_K\|\nabla v\|_{0,K}\lesssim \|v\|_{0,K}$ in Lemma \ref{lm:QK} to get the desired result.
\end{proof}

\begin{corollary}[$L^2$-stability of $\Pi_k^{\nabla}$]\label{cor:PiK}
Let $k,l$ be two positive integers and $l\geq k-2$. The operator $\Pi_k^{\nabla}: V_{k,l}(K) \to \mathbb  P_{k}(K)$ is $L^2$-stable, i.e.,
$$
\|\Pi_k^{\nabla}v\|_{0,K} \lesssim \|v\|_{0,K}, \quad \text{for all } v\in V_{k,l}(K).
$$
\end{corollary}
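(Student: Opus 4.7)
The plan is to bound $\|\Pi_k^{\nabla} v\|_{0,K}$ by adding and subtracting $v$ and controlling the residual. By the triangle inequality,
\[
\|\Pi_k^{\nabla} v\|_{0,K} \le \|v\|_{0,K} + \|v - \Pi_k^{\nabla} v\|_{0,K},
\]
so it suffices to show $\|v - \Pi_k^{\nabla} v\|_{0,K} \lesssim \|v\|_{0,K}$.

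The next step is to use the Poincar\'e--Friedrichs inequality of Lemma \ref{lm:PF} to absorb the $L^2$-norm of the residual into its $H^1$-seminorm:
\[
\|v - \Pi_k^{\nabla} v\|_{0,K} \lesssim h_K \|\nabla (v - \Pi_k^{\nabla} v)\|_{0,K}.
\]
The defining $H^1$-orthogonality of $\Pi_k^{\nabla}$ yields the Pythagoras identity $\|\nabla v\|_{0,K}^2 = \|\nabla \Pi_k^{\nabla} v\|_{0,K}^2 + \|\nabla (v - \Pi_k^{\nabla} v)\|_{0,K}^2$, so in particular $\|\nabla (v - \Pi_k^{\nabla} v)\|_{0,K} \le \|\nabla v\|_{0,K}$.

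Finally, since $v \in V_{k,l}(K)$ with $l \ge k-2$ (which is exactly the condition that makes $\Pi_k^{\nabla} v$ computable from the d.o.f.\ in the VEM sense, and in particular places $v$ in a space on which Theorem \ref{th:inverse} applies), the VEM inverse inequality gives $h_K\|\nabla v\|_{0,K} \lesssim \|v\|_{0,K}$. Chaining the three estimates produces $\|\Pi_k^{\nabla} v\|_{0,K} \lesssim \|v\|_{0,K}$ with a constant depending only on the shape regularity and quasi-uniformity of $\mathcal T_K$.

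There is no genuine obstacle here; all the machinery is already in place. The only small point to verify is that the constraint used to fix the kernel of $\Pi_k^{\nabla}$ (the interior average $\int_K v\dx = \int_K \Pi_k^{\nabla} v\dx$ when $l\ge 0$, or the boundary average when $l=-1$) is consistent with the hypothesis of Lemma \ref{lm:PF}; in either case $v - \Pi_k^{\nabla} v$ satisfies precisely the zero-mean type condition under which the scaled Poincar\'e--Friedrichs estimate of \cite{Brenner.S2003a} was proved on $\mathcal T_K$.
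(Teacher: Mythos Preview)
Your proof is correct and follows essentially the same route as the paper: triangle inequality, then the Poincar\'e--Friedrichs inequality of Lemma~\ref{lm:PF}, then the $H^1$-stability of $\Pi_k^{\nabla}$, and finally the inverse inequality of Theorem~\ref{th:inverse}. One minor remark: the condition $l\ge k-2$ is needed only for the computability of $\Pi_k^{\nabla}$ from the d.o.f., not for the applicability of Theorem~\ref{th:inverse}, which holds for all $V_{k,l}(K)$; and Lemma~\ref{lm:PF} is already stated directly for $u-\Pi_k^{\nabla}u$, so no separate verification of the mean-zero constraint is required.
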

\begin{proof}
 By the triangle inequality and the Poincar\'e-Friedrichs inequality, we have
 $$
\|\Pi_k^{\nabla}v\|_{0,K}\leq \|v\|_{0,K} + \| v- \Pi_k^{\nabla}v\|_{0,K} \lesssim \|v\|_{0,K} + h_K\| \nabla(v- \Pi_k^{\nabla}v)\|_{0,K}.
 $$
 Then by the $H^1$-stability of $\Pi_k^{\nabla}$ and the inverse inequality
 $$
 h_K\| \nabla(v- \Pi_k^{\nabla}v)\|_{0,K}\lesssim h_K\| \nabla v\|_{0,K}\lesssim \|v\|_{0,K}.
 $$
  The proof is thus completed.
\end{proof}

\section{Norm Equivalence}\label{sec:norm}
We shall prove a norm equivalence between $L^2$-norm of a VEM function and $l^2$-norm of the corresponding vector representation using d.o.f.
Consequently we obtain the stability of two stabilization choices used in VEM formulation.

\subsection{Norm equivalence of polynomial spaces on a polygon}
We begin with a norm equivalence of polynomial spaces on polygons.
\begin{lemma}[Norm equivalence of polynomial spaces on a polygon]\label{lm:g}
Let $g = \sum_{\alpha} g_{\alpha}m_{\alpha}$ be a polynomial on $K$. Denote by $\boldsymbol  g = (g_{\alpha})$ the coefficient vector. Then we have the norm equivalence
 $$
h_{K}\|\boldsymbol  g\|_{l^2} \lesssim \|g\|_{0,K}\lesssim h_K\|\boldsymbol  g\|_{l^2}.
 $$
\end{lemma}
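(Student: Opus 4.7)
The plan is to prove the two inequalities separately. The upper bound is essentially immediate from the scaling of the monomials, while the lower bound reduces, via a single triangle of the virtual triangulation $\mathcal T_K$, to classical affine scaling on a shape regular triangle combined with a bounded change-of-basis argument. Both constants should depend only on the shape regularity and quasi-uniformity of $\mathcal T_K$ (and on $l$).

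For the upper bound, I would simply note that $|\boldsymbol x - \boldsymbol x_c|\le h_K$ on $K$ implies $|m_\alpha(\boldsymbol x)|\le 1$, so $\|m_\alpha\|_{0,K}\le |K|^{1/2}\lesssim h_K$. Since the cardinality of $\mathbb M_l(K)$ is a fixed number depending only on $l$, the triangle inequality followed by Cauchy--Schwarz gives
$$
\|g\|_{0,K}\le \sum_{|\alpha|\le l}|g_\alpha|\,\|m_\alpha\|_{0,K}\lesssim h_K\,\|\boldsymbol g\|_{l^2}.
$$

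For the lower bound, I would pick any triangle $\tau\in\mathcal T_K$. By \textbf{A2}, $\tau$ is shape regular with $h_\tau\eqsim h_K$, and trivially $\|g\|_{0,K}\ge\|g\|_{0,\tau}$. I then introduce the \emph{locally} centered scaled monomial basis $\tilde m_\beta(\boldsymbol x)=((\boldsymbol x-\boldsymbol x_\tau)/h_\tau)^\beta$ at the centroid $\boldsymbol x_\tau$ of $\tau$, and expand $g=\sum_\beta \tilde g_\beta \tilde m_\beta$. A standard affine map to a fixed reference triangle, together with shape regularity of $\tau$, yields the textbook polynomial norm equivalence
$$
\|g\|_{0,\tau}\gtrsim h_\tau\,\|\tilde{\boldsymbol g}\|_{l^2}\eqsim h_K\,\|\tilde{\boldsymbol g}\|_{l^2}.
$$
What remains is to compare $\tilde{\boldsymbol g}$ with $\boldsymbol g$. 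Writing $\tilde m_\beta(\boldsymbol x)=(h_K/h_\tau)^{|\beta|}(\hat{\boldsymbol x}-\boldsymbol c)^\beta$ with $\hat{\boldsymbol x}=(\boldsymbol x-\boldsymbol x_c)/h_K$ and $\boldsymbol c=(\boldsymbol x_\tau-\boldsymbol x_c)/h_K$, and expanding by the binomial theorem, each $\tilde m_\beta$ is a linear combination of the $m_\gamma$'s with coefficients bounded by constants depending only on $l$, since $h_K/h_\tau\eqsim 1$ and $|\boldsymbol c|\le 1$ (because $\boldsymbol x_\tau\in K$). Equivalently, the transition matrix $T$ defined by $\boldsymbol g=T\tilde{\boldsymbol g}$ has uniformly bounded entries, so $\|\boldsymbol g\|_{l^2}\lesssim\|\tilde{\boldsymbol g}\|_{l^2}$, and chaining the three estimates gives $h_K\|\boldsymbol g\|_{l^2}\lesssim\|g\|_{0,K}$.

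The main obstacle is conceptual rather than computational: the obvious scaling to a reference triangle on $\tau$ naturally produces the $l^2$-norm of coefficients in the monomial basis centered at $\boldsymbol x_\tau$ with scale $h_\tau$, not in the global basis $\mathbb M_l(K)$. The change-of-basis step stays uniform precisely because \textbf{A2} forces $h_\tau\eqsim h_K$ while $\boldsymbol x_\tau\in\tau\subset K$ keeps the shift $|\boldsymbol x_\tau-\boldsymbol x_c|$ within $h_K$; losing either of these controls would allow the transition matrix to blow up with the shape of $K$.
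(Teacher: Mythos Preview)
Your argument is correct; the upper bound matches the paper verbatim, while for the lower bound you take a genuinely different route. The paper restricts not to the triangle $\tau$ itself but to an inscribed disk $S_\tau\subset\tau$ of radius $\delta h_K$, and then applies the \emph{global} scaling $\hat{\boldsymbol x}=(\boldsymbol x-\boldsymbol x_c)/h_K$; under this map the coefficient vector $\boldsymbol g$ is preserved and the problem becomes bounding $\lambda_{\min}$ of the mass matrix of the standard monomials over a disk of fixed radius $\delta$ whose center varies in the closed unit disk. A short compactness argument (continuous dependence of $\lambda_{\min}$ on the center) then gives the uniform lower bound. Your approach instead scales locally around $\boldsymbol x_\tau$, invokes the textbook norm equivalence on a shape-regular triangle, and pays for the mismatch with the global basis $\mathbb M_l(K)$ by an explicit change-of-basis whose transition matrix you bound entrywise via $h_K/h_\tau\eqsim 1$ and $|\boldsymbol x_\tau-\boldsymbol x_c|\le h_K$. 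The trade-off: the paper avoids the change-of-basis bookkeeping entirely but uses a (mild) compactness step; your version is fully constructive, with constants traceable to binomial coefficients and the quasi-uniformity ratio, which is arguably more in the spirit of the paper's stated goal of avoiding compactness arguments.
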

\begin{proof}
 The inequality $\|g\|_{0,K}\lesssim h_K\|\boldsymbol  g\|_{l^2}$ is straightforward. As $\boldsymbol  x_c$ is the average of coordinates of all vertices of the polygon, we have $\|m_{\alpha}\|_{\infty, K}\leq 1$ and thus $\|m_{\alpha}\|_{0, K}\lesssim h_K$. Then by Minkowski's inequality and the Cauchy inequality,
 $$
 \|g\|_{0,K} \leq \sum_{\alpha} |g_{\alpha}|\|m_{\alpha}\|_{0,K} \lesssim h_K\|\boldsymbol  g\|_{l^2}.
 $$

The lower bound $h_{K}\|\boldsymbol  g\|_{l^2} \lesssim \|g\|_{0,K}$ is technical. Again we cannot apply the standard scaling argument since there is no reference polygon. Instead we chose a circle $S_{\tau}$ inside a triangle $\tau$ in the virtual triangulation $\mathcal T_{K}$ such that the radius satisfies $r_{\tau} = \delta h_K$, where the constant $\delta \in (0,1)$ depending only on the shape regularity and quasi-uniformity of the triangulation $\mathcal T_{K}$. We then apply an affine map $\hat {\boldsymbol  x} = (\boldsymbol  x- \boldsymbol  x_c)/h_{K}$. The transformed circle $\hat S_{\tau}$ with radius $\delta$ is contained in the unit disk with center origin. As $S_{\tau}\subset K$, we have
\begin{equation}\label{eq:scale}
\|g\|_{0,K} \geq \|g\|_{0,S_{\tau}} = \|\hat g\|_{0,\hat S_{\tau}} h_K,
\end{equation}
where $\hat g(\hat{\boldsymbol  x}) := g(\boldsymbol  x)$.
Let $\hat M_{ij} = \int_{\hat S_{\tau}}\hat m_i \hat m_j \dd \hat x$ and $\hat M = (\hat M_{ij})$. Then
\begin{equation}\label{eq:ghat}
\|\hat g\|_{0,\hat S_{\tau}}^2 = \boldsymbol  g^{\intercal} \hat M \boldsymbol  g \geq \lambda_{\min}(\hat M) \|\boldsymbol  g\|_{l^2}^2.
\end{equation}
The entry $\hat M_{ij}$ of the mass matrix is a continuous function of the center $\boldsymbol  c$ of the circle $\hat S_{\tau}$. So is $\lambda_{\min}(\hat M) = \lambda_{\min} (\boldsymbol  c)$. By our construction, $\boldsymbol  c$ is contained in the unit disk. We then let $\lambda^* = \min_{\boldsymbol  c, |\boldsymbol  c|\leq 1}\lambda_{\min} (\boldsymbol  c)$ and obtain a uniform bound
$\|\hat g\|_{0,\hat S_{\tau}}^2 \geq \lambda^*  \|\boldsymbol  g\|_{l^2}^2$. Notice that after the scaling, we work on a reference circle and thus the constant $\lambda^*$ will depend only on the radius $\delta$ of $\hat S_{\tau}$.

Combining \eqref{eq:scale} and \eqref{eq:ghat}, we obtain the desired inequality
$$
h_{K}\|\boldsymbol  g\|_{l^2} \lesssim \|g\|_{0,K},
$$
with a constant depending only the shape regularity and quasi-uniform constants of triangulation $\mathcal T_{K}$.
\end{proof}

\subsection{Norm equivalence for VEM spaces}
We shall prove the normal equivalence of the $L^2$-norm of VEM functions and the $l^2$-norm of their corresponding vectors.
\begin{lemma}[Lower bound]
 For $v\in V_{k,l}(K)$, we have the inequality
 $$
 h_K\|\boldsymbol  \chi (v)\|_{l^2} \lesssim \|v\|_{0,K}.
 $$
\end{lemma}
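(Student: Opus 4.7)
The plan is to bound each component of the d.o.f.\ vector $\boldsymbol{\chi}(v)$ separately by $h_K^{-1}\|v\|_{0,K}$ (times a harmless constant) and then sum. Recall that $\boldsymbol{\chi}(v)$ consists of three groups: vertex values $\chi_a(v)=v(a)$, scaled edge moments $\chi_e(v)=|e|^{-1}(m,v)_e$ with $m\in\mathbb M_{k-2}(e)$, and scaled interior moments $\chi_K(v)=|K|^{-1}(m,v)_K$ with $m\in\mathbb M_{l}(K)$. The interior moments are the easiest: by Cauchy--Schwarz, $|\chi_K(v)|\le |K|^{-1}\|m\|_{0,K}\|v\|_{0,K}$, and since Lemma~\ref{lm:g} (or a direct estimate from $\|m\|_{\infty,K}\le 1$) gives $\|m\|_{0,K}\lesssim h_K$ while $|K|\eqsim h_K^2$, we obtain $|\chi_K(v)|\lesssim h_K^{-1}\|v\|_{0,K}$.

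For the boundary d.o.f.\ I would funnel everything through the auxiliary operator $Q_K\colon V_{k,l}(K)\to S_k(\mathcal T_K)$ introduced in the previous section, exploiting two facts: $(Q_Kv)|_{\partial K}=v|_{\partial K}$, so vertex values and edge moments of $v$ coincide with those of $Q_Kv$; and by Corollary~\ref{cor:QK}, $\|Q_Kv\|_{0,K}\lesssim \|v\|_{0,K}$ (whose proof in turn uses the inverse inequality from Theorem~\ref{th:inverse}). Since $Q_Kv$ is a piecewise polynomial of degree $k$ on the shape-regular quasi-uniform triangulation $\mathcal T_K$, standard polynomial inverse/trace estimates apply locally on each $\tau\in\mathcal T_K$.

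Concretely, for a vertex $a$ of $K$ lying in a triangle $\tau\in\mathcal T_K$, the polynomial inverse estimate for $Q_Kv|_\tau$ gives $|v(a)|^2=|Q_Kv(a)|^2\lesssim h_\tau^{-2}\|Q_Kv\|_{0,\tau}^2\lesssim h_K^{-2}\|Q_Kv\|_{0,\tau}^2$. For an edge $e\subset\partial K$ belonging to triangle $\tau_e\in\mathcal T_K$, Cauchy--Schwarz on $e$ with $\|m\|_{0,e}\lesssim |e|^{1/2}$ yields $|\chi_e(v)|^2\lesssim |e|^{-1}\|v\|_{0,e}^2=|e|^{-1}\|Q_Kv\|_{0,e}^2$; then a polynomial trace estimate on $\tau_e$ gives $\|Q_Kv\|_{0,e}^2\lesssim h_{\tau_e}^{-1}\|Q_Kv\|_{0,\tau_e}^2$, and using $|e|\eqsim h_K\eqsim h_{\tau_e}$ produces $|\chi_e(v)|^2\lesssim h_K^{-2}\|Q_Kv\|_{0,\tau_e}^2$. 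Summing these local bounds over all vertices and all edge-moment indices, and using the finite overlap of the triangles $\tau,\tau_e\in\mathcal T_K$, I get the total contribution of the boundary d.o.f.\ bounded by $h_K^{-2}\|Q_Kv\|_{0,K}^2\lesssim h_K^{-2}\|v\|_{0,K}^2$.

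Combining the three groups and taking square roots yields $h_K\|\boldsymbol{\chi}(v)\|_{l^2}\lesssim\|v\|_{0,K}$, as desired. The only genuinely non-trivial step is the treatment of vertex values and edge moments, since these quantities are not directly controlled by $\|v\|_{0,K}$ for a general $v\in V_{k,l}(K)$; the decisive trick is the replacement by the finite element proxy $Q_Kv\in S_k(\mathcal T_K)$, which is legitimate precisely because $v$ and $Q_Kv$ share boundary values and because $Q_K$ is $L^2$-stable on $V_{k,l}(K)$, a result itself built on the inverse inequality established earlier.
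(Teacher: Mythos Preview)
Your proof is correct and uses essentially the same ingredients as the paper: Cauchy--Schwarz for the interior moments, and for the boundary d.o.f.\ the operator $Q_K$ together with the inverse inequality for VEM functions. The only difference is cosmetic: the paper bounds the boundary d.o.f.\ via $h_K^{1/2}\|v\|_{0,\partial K}$ and then invokes the weighted trace estimate of Lemma~\ref{lm:QK} plus Theorem~\ref{th:inverse} directly, whereas you bound them by $h_K^{-1}\|Q_Kv\|_{0,K}$ using local polynomial inverse estimates and then cite Corollary~\ref{cor:QK} (which itself packages Lemma~\ref{lm:QK} and Theorem~\ref{th:inverse}); the two routes are equivalent.
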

\begin{proof}
 We group the d.o.f. into two groups: $\boldsymbol  \chi_{\partial K}(\cdot)$ are d.o.f associated with the boundary of $K$ and $\boldsymbol  \chi_{K}(\cdot)$ are moments in $K$.

Restricted to the boundary $v|_{\partial K}\in \mathbb  B_k(K)$ which consists of standard Lagrange elements. So by the standard scaling argument, we have
$$
h_K\|\boldsymbol  \chi_{\partial K} (v)\|_{l^2} \eqsim h_K^{1/2}\|v\|_{0,\partial K}.
$$
Apply the weighted trace theorem (cf. Lemma \ref{lm:QK}), and the inverse inequality to functions in VEM spaces to obtain
$$
h_K^{1/2}\|v\|_{0,\partial K} \lesssim \|v\|_{0,K} + h_K\|\nabla v\|_{0,K}  \lesssim \|v\|_{0,K}.
$$

For the d.o.f. of interior moments, we apply the Cauchy-Schwarz inequality to obtain
$$
|K|^{-1}\int_K v m \dx \leq |K|^{-1}\|v\|_{0,K}\|m\|_{0,K} \lesssim h_K^{-1}\|v\|_{0,K}, \quad \text{for all }m \in \mathbb  M_l.
$$

Combining the estimate of $\boldsymbol  \chi_{\partial K}(\cdot)$ and $\boldsymbol  \chi_{K}(\cdot)$, we finish the proof.
\end{proof}

Estimate of the upper bound turns out to be technical. Again we shall use the $H^1$ decomposition presented in Lemma \ref{lm:dec}.
%
\begin{lemma}[Upper bound for the harmonic part]\label{lm:harmupp}
For any $v\in V_{k,l}(K)$, let $v_1\in H^1(K)$ satisfy $v_1|_{\partial K} = v|_{\partial K}$ and $\Delta v_1 = 0$ in $K$. Then
$$
\|v_1\|_{0,K} \lesssim h_K \|\boldsymbol  \chi_{\partial K} (v)\|_{l^2}.
$$
\end{lemma}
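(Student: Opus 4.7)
The plan is to compare $v_1$ with an explicit finite element function on the virtual triangulation $\mathcal T_K$ that carries the same boundary data, and then estimate the difference (which is in $H_0^1(K)$) via Poincaré–Friedrichs combined with the harmonic minimization property of $v_1$.

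Concretely, I would reuse the function $v_{\partial,h}\in S_k(\mathcal T_K)$ introduced in the proof of Lemma \ref{lm:QK}: $v_{\partial,h}$ equals $v$ at every Lagrange node on $\partial K$ and vanishes at every interior node, so $v_{\partial,h}|_{\partial K}=v|_{\partial K}=v_1|_{\partial K}$. First I would show
$$
\|v_{\partial,h}\|_{0,K}\lesssim h_K\,\|\boldsymbol\chi_{\partial K}(v)\|_{l^2}.
$$
This is standard FEM: for each boundary edge $e$ with adjacent triangle $\tau_e\in\mathcal T_K$, the scaling argument on $\tau_e$ yields $\|v_{\partial,h}\|_{0,\tau_e}^2\lesssim h_e\|v\|_{0,e}^2$, and the usual 1D norm equivalence between $\|v|_e\|_{0,e}^2$ and the squared boundary d.o.f. of $v$ on $e$ (scaled by $|e|$) combines with $h_e\eqsim h_K$ after summation over $e\subset\partial K$.

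Next I would write $v_1 = v_{\partial,h} - (v_{\partial,h}-v_1)$ with $v_{\partial,h}-v_1\in H_0^1(K)$ and invoke Poincaré–Friedrichs to obtain
$$
\|v_{\partial,h}-v_1\|_{0,K}\lesssim h_K\,\|\nabla(v_{\partial,h}-v_1)\|_{0,K}.
$$
The harmonic minimization principle \eqref{eq:min}, applied to $v_1$ and to the admissible competitor $v_{\partial,h}$, gives $\|\nabla v_1\|_{0,K}\le\|\nabla v_{\partial,h}\|_{0,K}$, hence by the triangle inequality $\|\nabla(v_{\partial,h}-v_1)\|_{0,K}\le 2\|\nabla v_{\partial,h}\|_{0,K}$. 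An inverse inequality on the shape‑regular finite element space $S_k(\mathcal T_K)$ then bounds $\|\nabla v_{\partial,h}\|_{0,K}\lesssim h_K^{-1}\|v_{\partial,h}\|_{0,K}\lesssim\|\boldsymbol\chi_{\partial K}(v)\|_{l^2}$. Adding the estimates for $\|v_{\partial,h}\|_{0,K}$ and $\|v_{\partial,h}-v_1\|_{0,K}$ produces the desired bound.

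The only delicate step is the first one, the norm equivalence between $\|v_{\partial,h}\|_{0,K}$ and $h_K\|\boldsymbol\chi_{\partial K}(v)\|_{l^2}$, because one must track both vertex values and the edge moments used as d.o.f.\ in $V_{k,l}(K)$; however, since $v_{\partial,h}$ is supported in the band of triangles of $\mathcal T_K$ touching $\partial K$ and agrees with $v$ on each edge, it is really a one‑dimensional unisolvence estimate on each edge transported to the adjacent triangle by the standard scaling argument, so it is routine rather than technical. The harmonic minimization and Poincaré–Friedrichs steps are clean once the competitor $v_{\partial,h}$ is in place.
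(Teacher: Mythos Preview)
Your argument is correct and complete: the competitor $v_{\partial,h}\in S_k(\mathcal T_K)$ matches $v_1$ on $\partial K$, so the harmonic minimization principle \eqref{eq:min} together with the standard inverse inequality on $S_k(\mathcal T_K)$ and the Poincar\'e--Friedrichs inequality for $v_{\partial,h}-v_1\in H_0^1(K)$ closes the estimate. The one-dimensional norm equivalence you invoke on each edge (between $\|v\|_{0,e}^2$ and $|e|$ times the squared boundary d.o.f.) is indeed a routine reference-interval argument, since the vertex values together with the scaled edge moments are unisolvent for $\mathbb P_k(e)$.

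The paper's route is quite different and shorter: it expands $v_1=\sum_i \chi_i(v)\phi_i$ in the harmonic basis dual to $\boldsymbol\chi_{\partial K}$ and bounds each $\|\phi_i\|_{0,K}$ via the maximum principle, using $\|\phi_i\|_{\infty,K}\le\|\phi_i\|_{\infty,\partial K}\lesssim 1$. Your approach trades the maximum principle for purely variational tools (energy minimization, Poincar\'e, FEM inverse inequality), staying entirely within the $L^2/H^1$ framework used elsewhere in the paper. The practical payoff is that your argument would carry over unchanged if the Laplacian in the definition of $V_{k,l}(K)$ were replaced by another coercive operator for which no maximum principle is available---for instance the discrete Laplacian $\Delta_h$ on $\mathcal T_K$ mentioned in the paper's own Remark following the unisolvence proof---whereas the paper's argument genuinely needs harmonicity. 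The cost is a slightly longer proof with one more auxiliary object.
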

\begin{proof}
 We can write $$v_1 = \sum_{i=1}^{N_{\partial K}}\chi_{i} (v_1)\phi_i(x),$$
 where $\{\phi_i|_{\partial K}\}\subset \mathbb  B_{k}(\partial K)$ is a dual basis of $\boldsymbol  \chi_{\partial K}$ on the boundary and $\Delta \phi_i = 0$ inside $K$. By Minkowski's inequality, it suffices to prove $\|\phi_i\|_{0,K}\lesssim h_K$.

Restricted to the boundary, we can apply the scaling argument for each edge and conclude $\|\phi_i\|_{\infty,\partial K}\leq C$. As $\phi_i$ is harmonic, by the maximum principle, $\|\phi_i\|_{\infty,K} \leq \|\phi_i\|_{\infty,\partial K}\leq C$. Then $\|\phi_i\|_{0,K}\lesssim h_K$ follows.
\end{proof}

\begin{lemma}[Upper bound for the moment part]
For any $v\in V_{k,l}(K)$, let $v_2 \in H_0^1(K)$ satisfy $\Delta v_2 = \Delta v$ in $K$. Then
$$
\|v_2\|_{0,K} \lesssim h_K \|\boldsymbol  \chi (v)\|_{l^2}.
$$
\end{lemma}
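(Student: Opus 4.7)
The plan is to control $\|v_2\|_{0,K}$ through the Poincar\'e--Friedrichs inequality $\|v_2\|_{0,K}\lesssim h_K\|\nabla v_2\|_{0,K}$ (available since $v_2\in H_0^1(K)$), and to extract the d.o.f.\ from the energy identity
$$
\|\nabla v_2\|_{0,K}^2 = -(\Delta v_2,v_2)_K = -(\Delta v,v_2)_K = -(\Delta v,v)_K + (\Delta v,v_1)_K,
$$
where I split $v_2 = v - v_1$ via Lemma~\ref{lm:dec} and estimate each of the two terms using tools already available.

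For $(\Delta v,v)_K$ the key fact is $\Delta v\in\mathbb P_l(K)$. Expanding $\Delta v=\sum_\alpha c_\alpha m_\alpha$ in $\mathbb M_l(K)$, the interior moment d.o.f.\ give $(v,m_\alpha)_K = |K|\chi_{K,\alpha}(v)$, so a Cauchy--Schwarz inequality in the coefficient space combined with $|K|\eqsim h_K^2$ and the lower bound $\|\boldsymbol c\|_{l^2}\lesssim h_K^{-1}\|\Delta v\|_{0,K}$ from Lemma~\ref{lm:g} produces
$$
|(\Delta v,v)_K| \;\lesssim\; h_K\,\|\Delta v\|_{0,K}\,\|\boldsymbol\chi_K(v)\|_{l^2}.
$$
For $(\Delta v,v_1)_K$, Cauchy--Schwarz followed by Lemma~\ref{lm:harmupp} yields
$$
|(\Delta v,v_1)_K|\;\leq\;\|\Delta v\|_{0,K}\|v_1\|_{0,K}\;\lesssim\; h_K\,\|\Delta v\|_{0,K}\,\|\boldsymbol\chi_{\partial K}(v)\|_{l^2}.
$$
Summing the two bounds gives
$$
\|\nabla v_2\|_{0,K}^2 \;\lesssim\; h_K\,\|\Delta v\|_{0,K}\,\|\boldsymbol\chi(v)\|_{l^2}. \qquad (\ast)
$$

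The main obstacle I anticipate is absorbing $\|\Delta v\|_{0,K}$ without falling into a circular dependence on $\|v_2\|_{0,K}$. The key observation is that $\Delta v_1 = 0$ gives $\Delta v = \Delta v_2$, and for any $\phi\in H_0^1(K)$ integration by parts yields $(\Delta v,\phi)_K = (\Delta v_2,\phi)_K = -(\nabla v_2,\nabla\phi)_K$, so
$$
\|\Delta v\|_{-1,K}\;\leq\;\|\nabla v_2\|_{0,K}.
$$
Invoking Lemma~\ref{lm:invpoly} with $i=1$ on the polynomial $\Delta v\in\mathbb P_l(K)$ then upgrades this to $\|\Delta v\|_{0,K}\lesssim h_K^{-1}\|\nabla v_2\|_{0,K}$.

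Substituting this inverse bound into $(\ast)$ cancels one power each of $\|\nabla v_2\|_{0,K}$ and $h_K$, producing $\|\nabla v_2\|_{0,K}\lesssim \|\boldsymbol\chi(v)\|_{l^2}$; Poincar\'e--Friedrichs then finishes with $\|v_2\|_{0,K}\lesssim h_K\|\nabla v_2\|_{0,K}\lesssim h_K\|\boldsymbol\chi(v)\|_{l^2}$. The crucial role of the identity $\Delta v=\Delta v_2$ is what lets the negative-norm duality trick close the estimate cleanly; without it one is driven to a less transparent Young's-inequality absorption, which I believe is avoidable.
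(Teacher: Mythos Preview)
Your proof is correct and follows essentially the same route as the paper's: the energy identity for $v_2\in H_0^1(K)$, the split $v_2=v-v_1$, the coefficient expansion of $\Delta v\in\mathbb P_l(K)$ combined with Lemma~\ref{lm:g} to extract $\boldsymbol\chi_K(v)$, Lemma~\ref{lm:harmupp} for the $v_1$ term, and the inverse inequality $\|\Delta v\|_{0,K}\lesssim h_K^{-1}\|\Delta v\|_{-1,K}\leq h_K^{-1}\|\nabla v_2\|_{0,K}$ to close the estimate. If anything, your bookkeeping of the $|K|\eqsim h_K^2$ factor and your citation of Lemma~\ref{lm:g} (rather than Lemma~\ref{lm:invpoly}) for the coefficient bound are cleaner than the paper's presentation.
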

\begin{proof}
Let $g = -\Delta v = -\Delta v_2$. Then by integration by parts
\begin{equation}\label{eq:v2}
\|\nabla v_2\|^2_{0,K} = -(\Delta v_2, v_2)_K = (g, v_2)_K = (g, v)_K - (g, v_1)_K.
\end{equation}
We expand $g$ in the basis $m_{\alpha}$ i.e. $g = \sum_{\alpha} g_{\alpha}m_{\alpha}$ and denote by $\boldsymbol  g = (g_{\alpha})$. Then by the Cauchy inequality and the normal equivalence for $g$ (cf. Lemma \ref{lm:invpoly}), we have
$$
(g, v)_K =  \sum_{\alpha} g_{\alpha} \chi_{\alpha}(v) \leq \|\boldsymbol  g\|_{l^2}\|\boldsymbol  \chi_{K}(v)\|_{l^2} \lesssim h_K^{-1}\|g\|_{0,K}\|\boldsymbol  \chi_{K}(v)\|_{l^2}.
$$
Substituting into \eqref{eq:v2}, we then obtain an upper bound of $\|\nabla v_2\|$ as
$$
\|\nabla v_2\|^2_{0,K} \lesssim h_K^{-1}\|\Delta v_2\|_{0,K} \left (\|\boldsymbol  \chi_{K}(v)\|_{l^2} +  \|v_1\|_{0,K} \right ) \lesssim \|\nabla v_2\|_{0,K} \|\boldsymbol  \chi(v)\|_{l^2}.
$$
In the last step, we have used the inverse inequality and the upper bound for $v_1$ established in Lemma \ref{lm:harmupp}. Canceling one $\|\nabla v_2\|_{0,K}$, we obtain the inequality
$$
\|\nabla v_2\|_{0,K} \lesssim \|\boldsymbol  \chi(v)\|_{l^2}.
$$

Finally we finish the proof by using the Poincar\'e inequality $\|v_2\|_{0,K}\lesssim h_K\|\nabla v_2\|_{0,K}$ for $v_2\in H_0^1(K)$.
\end{proof}

In summary, the following theorem holds.
\begin{theorem}[Norm equivalence between $L^2$ and $l^2$-norms]\label{th:norm}
For any $v\in V_{k,l}(K)$, we have the norm equivalence
$$
h_K \|\boldsymbol  \chi (v)\|_{l^2} \lesssim \|v\|_{0,K} \lesssim h_K \|\boldsymbol  \chi (v)\|_{l^2}.
$$
\end{theorem}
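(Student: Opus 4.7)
The theorem is essentially a direct corollary of the three lemmas that immediately precede it, so my plan is to assemble them rather than to produce new estimates.

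First, the lower bound $h_K\|\boldsymbol\chi(v)\|_{l^2} \lesssim \|v\|_{0,K}$ is already established in the first lemma of Section 4.2 (Lower bound), so I would simply invoke it.

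For the upper bound $\|v\|_{0,K} \lesssim h_K\|\boldsymbol\chi(v)\|_{l^2}$, I would apply the $H^1$-orthogonal decomposition of Lemma~\ref{lm:dec} to write $v = v_1 + v_2$ with $v_1$ harmonic in $K$, $v_1|_{\partial K} = v|_{\partial K}$, and $v_2 \in H_0^1(K)$ satisfying $\Delta v_2 = \Delta v$. The triangle inequality gives
$$
\|v\|_{0,K} \leq \|v_1\|_{0,K} + \|v_2\|_{0,K}.
$$
The harmonic part is controlled by Lemma~\ref{lm:harmupp}: $\|v_1\|_{0,K} \lesssim h_K \|\boldsymbol\chi_{\partial K}(v)\|_{l^2}$, and the moment part is controlled by the preceding lemma: $\|v_2\|_{0,K} \lesssim h_K \|\boldsymbol\chi(v)\|_{l^2}$. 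Since $\|\boldsymbol\chi_{\partial K}(v)\|_{l^2} \leq \|\boldsymbol\chi(v)\|_{l^2}$ trivially, adding the two bounds yields the desired upper bound.

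The real work has already been done in the preceding three lemmas—the lower bound required a weighted trace estimate and the inverse inequality (Theorem~\ref{th:inverse}), while the upper bounds required harmonic extension arguments and the inverse inequality for the moment part $v_2$. The only step here is assembly. There is no genuine obstacle; I would keep the proof to a couple of lines that cite the three lemmas explicitly.
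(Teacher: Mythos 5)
Your proposal is correct and is exactly the route the paper takes: the paper introduces Theorem~\ref{th:norm} with ``In summary,'' treating it as the assembly of the lower-bound lemma together with Lemmas~\ref{lm:dec} and~\ref{lm:harmupp} and the moment-part lemma via the triangle inequality, just as you describe. Your observation that $\|\boldsymbol\chi_{\partial K}(v)\|_{l^2}\leq\|\boldsymbol\chi(v)\|_{l^2}$ closes the only remaining step, so nothing is missing.
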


For functions in space $V_k(K)$, Theorem \ref{th:norm} can be applied directly. For space $W_k(K)\subset V_{k,k}(K)$, if we apply Theorem \ref{th:norm} for functions in $V_{k,k}(K)$, additional moments in $\boldsymbol  \chi_{K}^{k}\backslash \boldsymbol  \chi_{K}^{k-2}$ should be involved. We shall show that for $W_k(K)$, no additional moment is needed.
\begin{corollary}[Norm equivalence between $L^2$ and $l^2$-norms for $W_k(K)$]\label{cor:norm}
For any $v\in W_{k}(K)$, we have the norm equivalence
$$
h_K \|\boldsymbol  \chi (v)\|_{l^2} \lesssim \|v\|_{0,K} \lesssim h_K \|\boldsymbol  \chi (v)\|_{l^2}.
$$
\end{corollary}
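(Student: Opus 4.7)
The strategy is to invoke Theorem \ref{th:norm} on the larger space $V_{k,k}(K)\supset W_k(K)$ and then to show that, for $w\in W_k(K)$, the ``extra'' moment d.o.f.\ (those against monomials in $\mathbb{M}_k(K)\setminus\mathbb{M}_{k-2}(K)$) are already dominated by the shorter $V_k(K)$ d.o.f.\ vector. Denote by $\boldsymbol{\chi}_{\rm ext}(w)$ the full $V_{k,k}(K)$ d.o.f.\ vector and by $\boldsymbol{\chi}(w)$ the $V_k(K)$ d.o.f.\ vector. The lower bound $h_K\|\boldsymbol{\chi}(w)\|_{l^2}\lesssim\|w\|_{0,K}$ is immediate from $\|\boldsymbol{\chi}(w)\|_{l^2}\le\|\boldsymbol{\chi}_{\rm ext}(w)\|_{l^2}$ and Theorem \ref{th:norm} applied to $w\in V_{k,k}(K)$.

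For the upper bound, the defining constraint of $W_k(K)$ gives $|K|^{-1}(w,m)_K=|K|^{-1}(\Pi_k^{\nabla}w,m)_K$ for every $m\in\mathbb{M}_k(K)\setminus\mathbb{M}_{k-2}(K)$. Cauchy--Schwarz together with $\|m\|_{0,K}\lesssim h_K$ and $|K|\eqsim h_K^2$ yields
$$\|\boldsymbol{\chi}_{\rm ext}(w)-\boldsymbol{\chi}(w)\|_{l^2}\lesssim h_K^{-1}\|\Pi_k^{\nabla}w\|_{0,K},$$
so everything reduces to proving the key estimate $\|\Pi_k^{\nabla}w\|_{0,K}\lesssim h_K\|\boldsymbol{\chi}(w)\|_{l^2}$. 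I would \emph{not} invoke Corollary \ref{cor:PiK} here, since that bounds $\|\Pi_k^{\nabla}w\|_{0,K}$ by $\|w\|_{0,K}$, the very quantity we are trying to control; instead I argue directly from the fact that $\Pi_k^{\nabla}w$ is computable from the $V_k$ d.o.f.

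Put $p=\Pi_k^{\nabla}w$. By the defining property of $\Pi_k^{\nabla}$ and integration by parts,
$$\|\nabla p\|_{0,K}^2=(\nabla w,\nabla p)_K=-(w,\Delta p)_K+\langle w,\,n\cdot\nabla p\rangle_{\partial K}.$$
Writing $\Delta p=\sum_\alpha c_\alpha m_\alpha\in\mathbb{P}_{k-2}$, Lemma \ref{lm:g} and the polynomial inverse inequality give $\|(c_\alpha)\|_{l^2}\lesssim h_K^{-2}\|\nabla p\|_{0,K}$, so the interior term is bounded by $\|\nabla p\|_{0,K}\|\boldsymbol{\chi}(w)\|_{l^2}$. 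For the boundary term, $w|_{\partial K}\in\mathbb{B}_k(\partial K)$ is a standard Lagrange element, so an edgewise scaling argument gives $\|w\|_{0,\partial K}\lesssim h_K^{1/2}\|\boldsymbol{\chi}_{\partial K}(w)\|_{l^2}$, while a trace/inverse inequality on the virtual triangulation gives $\|n\cdot\nabla p\|_{0,\partial K}\lesssim h_K^{-1/2}\|\nabla p\|_{0,K}$. Multiplying and cancelling one factor of $\|\nabla p\|_{0,K}$ yields $\|\nabla\Pi_k^{\nabla}w\|_{0,K}\lesssim\|\boldsymbol{\chi}(w)\|_{l^2}$. The $L^2$-norm of $p$ is then recovered by Poincar\'e plus the computable mean $\int_K p=\int_K w=|K|\chi_{K,1}(w)$, delivering the key bound; plugging it back closes the argument via Theorem \ref{th:norm}.

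The only real obstacle is breaking the apparent circularity in estimating $\|\Pi_k^{\nabla}w\|_{0,K}$: the naive approach via Corollary \ref{cor:PiK} reintroduces $\|w\|_{0,K}$, which is what we set out to bound. The integration-by-parts argument above sidesteps this by going through the $V_k$ d.o.f.\ directly, in the same spirit as the lower-bound proof of Theorem \ref{th:norm}. The lowest-order case $k=1$ (no interior moments, and the constraint on $\Pi_k^{\nabla}$ is the boundary average $\int_{\partial K}p=\int_{\partial K}w$) is handled analogously with only cosmetic changes.
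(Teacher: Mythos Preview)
Your proof is correct, but it differs from the paper's in how the key estimate $\|\Pi_k^{\nabla}w\|_{0,K}\lesssim h_K\|\boldsymbol{\chi}(w)\|_{l^2}$ is obtained. The paper does in fact use Corollary~\ref{cor:PiK}, and there is no circularity: the trick is to introduce an auxiliary function $\tilde v\in V_k(K)$ sharing the \emph{same} d.o.f.\ vector, $\boldsymbol{\chi}(\tilde v)=\boldsymbol{\chi}(w)$. Since $\Pi_k^{\nabla}$ is uniquely determined by these d.o.f., one has $\Pi_k^{\nabla}w=\Pi_k^{\nabla}\tilde v$, and then Corollary~\ref{cor:PiK} together with the already-established Theorem~\ref{th:norm} on $V_k(K)$ gives $\|\Pi_k^{\nabla}w\|_{0,K}=\|\Pi_k^{\nabla}\tilde v\|_{0,K}\lesssim\|\tilde v\|_{0,K}\lesssim h_K\|\boldsymbol{\chi}(\tilde v)\|_{l^2}=h_K\|\boldsymbol{\chi}(w)\|_{l^2}$. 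Your concern about circularity would be valid only if one applied Corollary~\ref{cor:PiK} directly to $w$; the switch to $\tilde v$ sidesteps it in two lines.

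Your alternative route---expanding $\|\nabla\Pi_k^{\nabla}w\|^2$ via integration by parts and bounding the volume and boundary terms separately against $\boldsymbol{\chi}(w)$---is also sound and is essentially a direct re-derivation, specialised to a polynomial argument, of what the paper packages into Corollary~\ref{cor:PiK} plus Theorem~\ref{th:norm} for $V_k(K)$. It has the merit of being self-contained (no auxiliary $\tilde v$) but costs a bit more bookkeeping, notably the separate treatment of the mean in the $k=1$ case. The paper's approach is shorter because it recycles results already in hand.
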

\begin{proof}
The lower bound $h_K \|\boldsymbol  \chi (v)\|_{l^2} \lesssim \|v\|_{0,K}$ is trivial since $W_k(K)$ is a subspace of $V_{k,k}(K)$ and the d.o.f. in $V_{k,k}(K)$ contains additional moments in $\boldsymbol  \chi_{K}^{k}\backslash \boldsymbol  \chi_{K}^{k-2}$ compared with that of $W_k(K)$. To prove the upper bound, it suffices to bound these additional moments by the others.

By the definition of $W_k(K)$,
$$
(v, m)_{K} = (\Pi_k^{\nabla} v, m)_K, \quad \text{for all } m \in \mathbb  M_{k}(K)\backslash  \mathbb  M_{k-2}(K).
$$
Then by the  Cauchy-Schwarz inequality and the bound $\|m\|_{0,K}\lesssim h_K$, it suffices to bound $\|\Pi_k^{\nabla}v\|$. Using the d.o.f. of $v\in W_k(K)$, we can define another function $\tilde v\in V_k(K)$ s.t. $\boldsymbol  \chi (\tilde v) = \boldsymbol  \chi (v)$. Notice that the projection $\Pi_k^{\nabla}$ is uniquely determined by the d.o.f., therefore $$\Pi_k^{\nabla}v = \Pi_k^{\nabla}\tilde v.$$

Then by the $L^2$-stability of $\Pi_k^{\nabla}$ (cf. Corollary \ref{cor:PiK}) and the norm equivalence for $\tilde v\in V_{k}(K)$, we obtain
$$
\|\Pi_k^{\nabla}v\|_{0,K} = \|\Pi_k^{\nabla}\tilde v\|_{0,K}\lesssim \|\tilde v\|_{0,K}\lesssim h_K \|\boldsymbol  \chi (\tilde v)\|_{l^2} = h_K \|\boldsymbol  \chi (v)\|_{l^2}.
$$

Then for $\chi \in \boldsymbol  \chi_{K}^{k}\backslash \boldsymbol  \chi_{K}^{k-2}$, we can bound
$$
|\chi(v)| = |K|^{-1}|(v, m)_{K}|\lesssim h_K^{-1} \|\Pi_k^{\nabla}v\| \lesssim \|\boldsymbol  \chi (v)\|_{l^2}.
$$
The proof is thus completed.
\end{proof}

\subsection{Norm equivalence of VEM formulation}
With Theorem \ref{th:norm}, we can verify the following stability result.
\begin{theorem}[Norm equivalence for stabilization using $\Pi_k^{\nabla}$]\label{th:VEMPi}
For $u\in V_k(K)$ or $W_k(K)$, we have the following norm equivalence
$$
\|\nabla u\|_{0,K}^2 \eqsim \|\nabla \Pi_k^{\nabla} u\|_{0,K}^2 + \|\boldsymbol  \chi (u - \Pi_k^{\nabla} u)\|_{l^2}^2.
$$
\end{theorem}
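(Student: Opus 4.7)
The plan is to reduce the claimed equivalence to the norm equivalence already proved (Theorem \ref{th:norm} or Corollary \ref{cor:norm}) by exploiting the $H^1$-orthogonality built into the definition of $\Pi_k^{\nabla}$. Set $w := u - \Pi_k^{\nabla} u$. Since $\Pi_k^{\nabla} u \in \mathbb P_k(K)$ and $\Pi_k^{\nabla}$ is defined by $(\nabla(u - \Pi_k^{\nabla} u), \nabla p)_K = 0$ for all $p \in \mathbb P_k(K)$, taking $p = \Pi_k^{\nabla} u$ gives the Pythagorean decomposition
\begin{equation*}
\|\nabla u\|_{0,K}^2 = \|\nabla \Pi_k^{\nabla} u\|_{0,K}^2 + \|\nabla w\|_{0,K}^2.
\end{equation*}
Thus the theorem collapses to the single assertion
\begin{equation*}
\|\nabla w\|_{0,K}^2 \;\eqsim\; \|\boldsymbol\chi(w)\|_{l^2}^2.
\end{equation*}

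To use the previously established machinery I first need $w$ to belong to a VEM space. Since $\mathbb P_k(K) \subset V_k(K)$ (because $\Delta p \in \mathbb P_{k-2}$) and similarly $\mathbb P_k(K) \subset W_k(K)$ (the defining orthogonality of $W_k$ is trivial when $p - \Pi_k^{\nabla} p \equiv 0$), the function $w$ lies in the same VEM space as $u$. Then for the upper bound $\|\nabla w\|_{0,K} \lesssim \|\boldsymbol\chi(w)\|_{l^2}$, I would chain the inverse inequality of Theorem \ref{th:inverse} with the lower bound $\|w\|_{0,K} \lesssim h_K \|\boldsymbol\chi(w)\|_{l^2}$ of Theorem \ref{th:norm} (or Corollary \ref{cor:norm}):
\begin{equation*}
\|\nabla w\|_{0,K} \;\lesssim\; h_K^{-1}\|w\|_{0,K} \;\lesssim\; \|\boldsymbol\chi(w)\|_{l^2}.
\end{equation*}
For the reverse direction, the Poincaré–Friedrichs inequality of Lemma \ref{lm:PF} applies to exactly this $w = u - \Pi_k^{\nabla} u$ and gives $\|w\|_{0,K} \lesssim h_K \|\nabla w\|_{0,K}$; combined with the upper bound $\|\boldsymbol\chi(w)\|_{l^2} \lesssim h_K^{-1}\|w\|_{0,K}$ from the norm equivalence, this yields
\begin{equation*}
\|\boldsymbol\chi(w)\|_{l^2} \;\lesssim\; h_K^{-1}\|w\|_{0,K} \;\lesssim\; \|\nabla w\|_{0,K}.
\end{equation*}

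No step is really an obstacle: the whole argument is a short bookkeeping exercise once the three heavy tools—Pythagorean orthogonality of $\Pi_k^{\nabla}$, the inverse inequality of Theorem \ref{th:inverse}, and the $L^2$–$\ell^2$ norm equivalence of Theorem \ref{th:norm}/Corollary \ref{cor:norm}—are in place. The one point that deserves a sentence of care is checking that $w$ really belongs to $V_k(K)$ or $W_k(K)$ so that those theorems apply to it; in the $W_k(K)$ case this uses that $\Pi_k^{\nabla}$ is determined solely by the d.o.f. (already exploited in the proof of Corollary \ref{cor:norm}). Everything else is direct substitution.
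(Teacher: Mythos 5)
Your proof is correct and follows essentially the same route as the paper's: the Pythagorean identity for $\Pi_k^{\nabla}$, then the inverse inequality (Theorem \ref{th:inverse}) chained with the $L^2$--$\ell^2$ norm equivalence for one direction, and the Poincar\'e--Friedrichs inequality (Lemma \ref{lm:PF}) with the other direction of the norm equivalence for the converse; your explicit check that $w = u - \Pi_k^{\nabla}u$ remains in $W_k(K)$ (so that Corollary \ref{cor:norm} applies) is the same observation the paper phrases as the extra moments in $\boldsymbol\chi_K^{k}\setminus\boldsymbol\chi_K^{k-2}$ of $u-\Pi_k^{\nabla}u$ vanishing. One cosmetic slip: you call $\|w\|_{0,K}\lesssim h_K\|\boldsymbol\chi(w)\|_{l^2}$ the ``lower bound'' of Theorem \ref{th:norm} and $\|\boldsymbol\chi(w)\|_{l^2}\lesssim h_K^{-1}\|w\|_{0,K}$ the ``upper bound'' (the paper's naming is the reverse), but the inequalities you actually invoke are the correct ones, so this does not affect the argument.
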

\begin{proof}
By the definition of $\Pi_k^{\nabla}$, we have the orthogonality:
$$
\|\nabla u\|_{0,K}^2 = \|\nabla \Pi_k^{\nabla} u\|_{0,K}^2 + \|\nabla (u - \Pi_k^{\nabla} u)\|_{0,K}^2.
$$
Using the inverse inequality and norm equivalence for $L^2$-norm, we obtain
$$
 \|\nabla (u - \Pi_k^{\nabla} u)\|_{0,K}\lesssim h_K^{-1} \|u - \Pi_k^{\nabla} u\|_{0,K}\lesssim \|\boldsymbol  \chi (u - \Pi_k^{\nabla} u)\|_{l^2}.
$$
For $u\in W_k(K)\subset V_{k,k}(K)$, when we apply the norm equivalence for functions $V_{k,k}(K)$, additional moments in $\boldsymbol  \chi_{K}^{k}\backslash \boldsymbol  \chi_{K}^{k-2}$ should be involved. However, when applied to $u - \Pi_k^{\nabla} u$, by the definition of $W_k(K)$, these moments vanished and no need to include these moments.

To prove the lower bound, we shall apply the Poincar\'e-Friedrichs inequality (cf. Lemma \ref{lm:PF}) and the lower bound in the norm equivalence to get
$$
\|\boldsymbol  \chi (u - \Pi_k^{\nabla} u)\|_{l^2}\lesssim \| u - \Pi_k^{\nabla} u\|_{0,K}\lesssim  h_K\|\nabla (u - \Pi_k^{\nabla} u)\|_{0,K}.
$$
\end{proof}

Following \cite{Ahmad2013}, we introduce the $L^2$-projection $\Pi_k^0: W_k(K)\to \mathbb  P_k(K)$ and verify the stability of another stabilization using $\Pi_k^0$. For moments up to $k-2$, we can use the d.o.f. of VEM function $v$ in $W_k(K)$ and for higher moments, we use $\Pi_k^{\nabla}v$. That is: given $v\in W_k(K)$, define $\Pi_k^0 v \in \mathbb  P_k(K)$ such that
$$
\begin{cases}
(\Pi_k^0 v, m)_K = (v, m)_K, & \text{for all } m \in \mathbb  P_{k-2}(K),\\
(\Pi_k^0 v, m)_K = (\Pi_k^{\nabla} v, m)_K, & \text{for all }  m \in \mathbb  P_k(K) \backslash\mathbb  P_{k-2}(K).
\end{cases}
$$
Using the slice operator $I - \Pi_k^0$, the stabilization can be reduced to the d.o.f. on the boundary only.
\begin{corollary}[Norm equivalence for stabilization using $\Pi_k^0$]
For $u\in W_k(K)$, we have the following norm equivalence
$$
\|\nabla u\|_{0,K}^2 \eqsim \|\nabla \Pi_k^{\nabla} u\|_{0,K}^2 + \|\boldsymbol  \chi_{\partial K} (u - \Pi_k^0 u)\|_{l^2}^2.
$$
\end{corollary}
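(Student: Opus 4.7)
The plan is to reduce this to Theorem~\ref{th:VEMPi} by replacing the full d.o.f. norm of $u-\Pi_k^\nabla u$ with the boundary-only d.o.f. norm of $u-\Pi_k^0 u$, exploiting two structural facts about $W_k(K)$: first, $\mathbb P_k(K)\subset W_k(K)$, so $u-\Pi_k^0 u\in W_k(K)$, which lets Corollary~\ref{cor:norm} apply; second, by the defining property of $W_k(K)$ one has $(u,m)_K=(\Pi_k^\nabla u,m)_K$ for $m\in\mathbb M_k\setminus\mathbb M_{k-2}$, so the definition of $\Pi_k^0$ combined with this identity makes $\Pi_k^0 u$ coincide with the full $L^2$-projection of $u$ onto $\mathbb P_k(K)$.

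The first step will be to check that the interior moment d.o.f. $\boldsymbol\chi_K^{k-2}(u-\Pi_k^0 u)$ vanish identically: this is immediate from the $m\in\mathbb P_{k-2}(K)$ clause in the definition of $\Pi_k^0$. Combining this with Corollary~\ref{cor:norm} applied to $u-\Pi_k^0 u\in W_k(K)$ yields the key identity
\[
\|u-\Pi_k^0 u\|_{0,K}\eqsim h_K\|\boldsymbol\chi_{\partial K}(u-\Pi_k^0 u)\|_{l^2},
\]
which is the bridge between the $L^2$ stabilization appearing in Theorem~\ref{th:VEMPi} and the boundary-only quantity we want.

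For the upper bound $\|\nabla u\|_{0,K}^2\lesssim \|\nabla\Pi_k^\nabla u\|_{0,K}^2+\|\boldsymbol\chi_{\partial K}(u-\Pi_k^0 u)\|_{l^2}^2$, I would start from the $H^1$-orthogonality $\|\nabla u\|_{0,K}^2=\|\nabla\Pi_k^\nabla u\|_{0,K}^2+\|\nabla(u-\Pi_k^\nabla u)\|_{0,K}^2$. Since $\Pi_k^0 u\in\mathbb P_k(K)$, one has $u-\Pi_k^\nabla u=(I-\Pi_k^\nabla)(u-\Pi_k^0 u)$, and the $H^1$-orthogonality of $\Pi_k^\nabla$ gives $\|\nabla(u-\Pi_k^\nabla u)\|_{0,K}\le\|\nabla(u-\Pi_k^0 u)\|_{0,K}$. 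Then I would apply the inverse inequality (Theorem~\ref{th:inverse}) to $u-\Pi_k^0 u\in W_k(K)$ and finally the bridge identity above to produce $\|\boldsymbol\chi_{\partial K}(u-\Pi_k^0 u)\|_{l^2}$.

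For the lower bound I would chain inequalities in the reverse order: first the bridge identity gives $\|\boldsymbol\chi_{\partial K}(u-\Pi_k^0 u)\|_{l^2}\lesssim h_K^{-1}\|u-\Pi_k^0 u\|_{0,K}$; next, using that $\Pi_k^0 u$ is the genuine $L^2$-projection of $u$ on $W_k(K)$, its best-approximation property yields $\|u-\Pi_k^0 u\|_{0,K}\le\|u-\Pi_k^\nabla u\|_{0,K}$; then Poincar\'e–Friedrichs (Lemma~\ref{lm:PF}) gives $\|u-\Pi_k^\nabla u\|_{0,K}\lesssim h_K\|\nabla(u-\Pi_k^\nabla u)\|_{0,K}\le h_K\|\nabla u\|_{0,K}$. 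Combined with the trivial $\|\nabla\Pi_k^\nabla u\|_{0,K}\le\|\nabla u\|_{0,K}$, both sides are controlled by $\|\nabla u\|_{0,K}^2$. The main obstacle is recognizing the $L^2$-projection identification of $\Pi_k^0$ on $W_k(K)$, which drives the lower bound; once that is in hand, the remaining pieces are routine applications of the inverse inequality, $H^1$-orthogonality of $\Pi_k^\nabla$, and the Poincar\'e–Friedrichs estimate already established.
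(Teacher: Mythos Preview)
Your proposal is correct and follows essentially the same route as the paper: both arguments establish $h_K^{-1}\|u-\Pi_k^0 u\|_{0,K}\eqsim\|\nabla(u-\Pi_k^\nabla u)\|_{0,K}$ via the identity $(I-\Pi_k^\nabla)u=(I-\Pi_k^\nabla)(I-\Pi_k^0)u$ together with the inverse inequality in one direction and an $L^2$-approximation property in the other, and then invoke the $L^2$--$l^2$ norm equivalence on $u-\Pi_k^0 u\in W_k(K)$ while noting that all interior moment d.o.f.\ vanish. Your explicit identification of $\Pi_k^0$ as the genuine $L^2$-projection on $W_k(K)$ (hence best approximation gives $\|u-\Pi_k^0 u\|_{0,K}\le\|u-\Pi_k^\nabla u\|_{0,K}$) is a clean way to phrase the lower-bound step; the paper expresses the same content as $(I-\Pi_k^0)u=(I-\Pi_k^0)(I-\Pi_k^\nabla)u$ followed by the approximation property of the $L^2$-projection.
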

\begin{proof}
As both $\Pi_k^{\nabla}$ and $\Pi_k^0$ will preserve polynomial of degree $k$, we can write $(I - \Pi_k^0) u = (I - \Pi_k^0) (I - \Pi_k^{\nabla})u$ and $(I - \Pi_k^{\nabla}) u =  (I - \Pi_k^{\nabla})(I - \Pi_k^0)u$.

Using the stability of $\Pi_k^{\nabla}$ in $H^1$-semi-norm and the inverse inequality for VEM functions, we get
$$
\|\nabla (I - \Pi_k^{\nabla}) u\|_{0,K} = \|\nabla (I - \Pi_k^{\nabla}) (I - \Pi_k^0)u\|_{0,K} \leq \|\nabla (I - \Pi_k^0) u\|_{0,K}\lesssim h_k^{-1}\| (I - \Pi_k^0) u\|_{0,K}.
$$
Going backwards, we use the approximation property of the $L^2$-projection
$$
\| (I - \Pi_k^0) u\|_{0,K} = \|(I - \Pi_k^0) (I - \Pi_k^{\nabla})u\|_{0,K}\lesssim h_K\| \nabla (I - \Pi_k^{\nabla}) u\|_{0,K}.
$$
In summary, we have proved the norm equivalence
$$
h_K^{-1}\| (I - \Pi_k^0) u\|_{0,K} \eqsim \|\nabla (I - \Pi_k^{\nabla}) u\|_{0,K}.
$$

We then apply the norm equivalence Theorem \ref{th:norm} to function $(I - \Pi_k^0) u\in W_{k}(K)$. Note that the moment d.o.f. $\boldsymbol  \chi_K$ are vanished by the definition of $\Pi_k^0$ and thus only boundary d.o.f. $\boldsymbol  \chi_{\partial K}$ are presented in the stabilization term.
\end{proof}

\begin{remark}\rm
We can define a $L^2$-projection $\Pi_k^0$ to $\mathbb  P_k$ using moments d.o.f. of a VEM function in $V_{k,k}(K)$. Given a function $v\in V_{k,k}(K)$, if we denote by $v^0 = \Pi_k^0 v$ and $v^b = v|_{\partial K}$, then $(v^b, v^0)$ is a variant of the so-called weak function introduced in the weak Galerkin methods (cf. \cite{MuWangYe2015polyredu}). The stabilization term can be formulated as
$$
(\boldsymbol  \chi_{\partial K} (u^b - u^0), \boldsymbol  \chi_{\partial K} (v^b - v^0)).
$$
The approximate gradient $\nabla \Pi_k^{\nabla} v$ is indeed a variant of a weak gradient of the weak function $(v^b, v^0)$. It is also equivalent to a special version of HDG: the embedded discontinuous Galerkin method (cf. \cite{Cockburn2009a,Guzey2007}).
\end{remark}

\section{Interpolation Error Estimates}
In this section, we shall provide interpolation error estimates for several interpolations to the VEM spaces.
We introduce the following projection and interpolants of a function $v\in H^1(K)\cap C^0(\bar K)$:
\begin{itemize}
 \item $v_{\pi}\in \mathbb P_k(K)$: the $L^2$ projection of $v$ to the polynomial space;

 \item $v_c\in S_k(\mathcal T_K)$: the standard nodal interpolant to finite element space $S_k(\mathcal T_K)$ based on the auxiliary triangulation $\mathcal T_K$ of $K$;

 \item $v_I\in V_k(K)$ defined as the solution of the local problem
 $$
 \Delta v_I = \Delta u_{\pi} \text{ in } K, \quad v_I = v_c \text{ on } \partial K.
 $$

 \item  $I_Kv\in V_k(K)$ defined by d.o.f., i.e.,
 $$
I_K v = v_c \text{ on } \partial K, \quad (I_K v, p)_K = (v, p)_K, \; \forall p\in \mathbb P_{k-2}(K).
 $$

 \item $I_K^Wv\in W_k(K)$ defined by d.o.f., i.e.,
 $$
I_K^W v = v_c \text{ on } \partial K, \quad  (I_K^W v, p)_K = (v, p)_K, \; \forall p\in \mathbb P_{k-2}(K).
 $$
\end{itemize}

Error estimate of $v_{\pi}$ and $v_{c}$ are well known (cf.  \cite{Brenner.S;Scott.L2008}): for $w_K = v_c$ or $v_{\pi}$
\begin{equation}\label{vII}
\|v - w_K\|_{0,K} + h_{K}|v-w_K|_{1,K} \lesssim h_K^{k+1}\|v\|_{k+1,K}, \quad \forall v\in H^{k+1}(K).
\end{equation}
\begin{remark}\rm
Error estimate for $v_{\pi}$ is usually presented for a star shaped domain but can be generalized to a domain which is a union of star shaped sub-domains (cf. \cite{Dupont.T;Scott.R1980}). Under assumption {\bf A2}, the polygon $K$ satisfies the previous condition, so the estimate \eqref{vII} holds for $w_K=v_{\pi}$.
\end{remark}

The following error estimate can be found in \cite[Proposition 4.2]{Mora2015}. For the completeness, we present a shorter proof by comparing $v_I$ with $v_c$.
\begin{lemma}[Interpolation error estimate of $u_I$]\label{lm:uI}
The following optimal order error estimate holds:
\begin{equation}\label{vI}
\|v - v_I\|_{0,K} + h_K|v-v_I|_{1,K} \lesssim h_K^{k+1}\|v\|_{k+1,K}, \quad \forall v\in H^{k+1}(K).
\end{equation}
\end{lemma}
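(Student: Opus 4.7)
The plan is to reduce the estimate for $v_I$ to the already-known estimates \eqref{vII} for the auxiliary approximants $v_\pi$ and $v_c$, by exploiting the definition of $v_I$ directly. The guiding splitting is
\[
v - v_I = (v - v_\pi) + (v_\pi - v_I),
\]
and then, when needed, $v_\pi - v_I = (v_\pi - v_c) + (v_c - v_I)$. The first summand in each split is controlled by \eqref{vII} and a triangle inequality, so the real work is to estimate $v_\pi - v_I$ and $v_c - v_I$.

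For the $H^1$-seminorm, I would use the crucial observation that $\Delta(v_\pi - v_I) = \Delta v_\pi - \Delta v_\pi = 0$ in $K$, so that $v_\pi - v_I$ is harmonic in $K$ with boundary trace $(v_\pi - v_c)|_{\partial K}$. The Dirichlet energy minimization property of harmonic extensions (the same \eqref{eq:min} used in the proof of Lemma \ref{lm:v1}) therefore gives
\[
|v_\pi - v_I|_{1,K} \;\le\; |v_\pi - v_c|_{1,K} \;\le\; |v - v_\pi|_{1,K} + |v - v_c|_{1,K} \;\lesssim\; h_K^{k}\|v\|_{k+1,K}.
\]
Combining with \eqref{vII} for $v - v_\pi$ yields the $H^1$-part $|v - v_I|_{1,K} \lesssim h_K^{k}\|v\|_{k+1,K}$.

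For the $L^2$-part, I would handle $v_\pi - v_c$ by \eqref{vII} and the triangle inequality (it gives $\|v_\pi - v_c\|_{0,K} \lesssim h_K^{k+1}\|v\|_{k+1,K}$ at once), and treat $v_c - v_I$ by Poincar\'e–Friedrichs. The key is that $v_c - v_I = 0$ on $\partial K$ by construction, so $v_c - v_I \in H_0^1(K)$ and
\[
\|v_c - v_I\|_{0,K} \;\le\; h_K |v_c - v_I|_{1,K} \;\le\; h_K\bigl(|v - v_c|_{1,K} + |v - v_I|_{1,K}\bigr) \;\lesssim\; h_K^{k+1}\|v\|_{k+1,K},
\]
where the last inequality uses the $H^1$-bound already obtained. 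Adding everything gives $\|v - v_I\|_{0,K} \lesssim h_K^{k+1}\|v\|_{k+1,K}$.

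I do not expect any substantial obstacle. The two engines of the argument—the energy-minimization characterization of the harmonic extension, and the $H_0^1$ Poincar\'e–Friedrichs inequality with constant proportional to $h_K$ on a bounded polygon of diameter $h_K$—are both classical; everything else is a triangle inequality together with \eqref{vII}. The only point deserving a brief remark is that \eqref{vII} for $v_\pi$ requires $K$ to be a finite union of star-shaped subdomains, which is guaranteed by assumption \textbf{A2} through the virtual triangulation $\mathcal T_K$.
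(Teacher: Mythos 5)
Your proposal is correct and follows essentially the same route as the paper: the paper's proof also reduces everything to \eqref{vII} by comparing $v_I$ with $v_c$ and $v_\pi$, obtains $\|\nabla(v_I - v_c)\|_{0,K} \le \|\nabla(v_\pi - v_c)\|_{0,K}$ from the variational identity $(\nabla v_I, \nabla \phi)_K = (\nabla v_\pi, \nabla \phi)_K$ for $\phi \in H_0^1(K)$ --- which is the same orthogonality your Dirichlet-energy-minimization step expresses, just read on the complementary leg of the Pythagorean identity --- and finishes with the Poincar\'e--Friedrichs inequality for $v_I - v_c \in H_0^1(K)$ exactly as you do. The only cosmetic difference is that you split through $v_\pi - v_I$ (harmonic part) while the paper works directly with $v_I - v_c$.
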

\begin{proof}
By the triangle inequality, it suffices to estimate the difference $v_I -v_c \in H_0^1(K)$. By the Poincar\'e inequality $\|v\|_{0,K} \lesssim h_K  \|\nabla v\|_{0,K}$  for $v\in H_0^1(K)$, it suffices to estimate the $H^1$-semi-norm.

 By the definition of $v_I$ and the fact $v_I -v_c \in H_0^1(K)$, we have
$$
(\nabla v_I, \nabla (v_I -v_c))_K = (\nabla v_{\pi}, \nabla (v_I -v_c))_K.
$$
Therefore
$$
\| \nabla (v_I -v_c) \|^2_{0,K} = (\nabla (v_{\pi} - v_c), \nabla (v_I -v_c))_K.
$$
By the Cauchy-Schwarz inequality and the triangle inequality, we then get
$$
\| \nabla (v_I -v_c) \|_{0,K}\leq \| \nabla (v_{\pi} -v_c) \|_{0,K}\leq \| \nabla (v -v_c) \|_{0,K} + \| \nabla (v -v_{\pi}) \|_{0,K}.
$$
The desired result then follows readily from error estimates for $v_c$ and $v_{\pi}$ together.
\end{proof}

Now we estimate $v - I_K v$ by comparing $I_K v$ with $v_I$.
\begin{theorem}[Interpolation error estimate of $I_Kv$]\label{th:IK}
For $v\in H^{k+1}(K)$, we have the optimal order error estimate in both $L^2$ and $H^1$-norm
$$ \|v - I_Kv\|_{0,K} + h_K |v- I_K v|_{1,K} \lesssim h_K^{k+1}\|v\|_{k+1,K}.$$
\end{theorem}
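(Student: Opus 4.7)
The plan is to prove this by comparing $I_K v$ with the auxiliary interpolant $v_I$ from Lemma \ref{lm:uI} and invoking the triangle inequality, since $v_I$ already satisfies the sharp error bound. Thus it suffices to show that the discrepancy $w := I_K v - v_I$ is of optimal order in $L^2$, and then upgrade to the $H^1$ estimate via the inverse inequality from Theorem \ref{th:inverse}.

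The first key observation is that $w \in V_k(K) \cap H_0^1(K)$: indeed, both $I_K v$ and $v_I$ are in $V_k(K) = V_{k,k-2}(K)$ and both agree with $v_c$ on $\partial K$, so the boundary trace of $w$ vanishes and hence all of its boundary degrees of freedom (vertex values and edge moments) are zero. Therefore, in the representation $\boldsymbol{\chi}(w) = (\boldsymbol{\chi}_{\partial K}(w), \boldsymbol{\chi}_K(w))$, only the interior moments can contribute.

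Next I would estimate those interior moments directly. For any $m \in \mathbb{M}_{k-2}(K)$, the definitions of $I_K v$ (its interior moments agree with those of $v$) and of $v_I$ give
\[
|K|^{-1}(w, m)_K = |K|^{-1}(v - v_I, m)_K,
\]
and combining $|K| \gtrsim h_K^2$ with $\|m\|_{0,K} \lesssim h_K$ yields $\|\boldsymbol{\chi}_K(w)\|_{l^2} \lesssim h_K^{-1}\|v - v_I\|_{0,K}$. Applying the upper bound in the norm equivalence (Theorem \ref{th:norm}) and then Lemma \ref{lm:uI} gives
\[
\|w\|_{0,K} \lesssim h_K\|\boldsymbol{\chi}(w)\|_{l^2} \lesssim \|v - v_I\|_{0,K} \lesssim h_K^{k+1}\|v\|_{k+1,K}.
\]
Combined with Lemma \ref{lm:uI} and the triangle inequality, this settles the $L^2$ estimate. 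For the $H^1$ part, apply the inverse inequality from Theorem \ref{th:inverse} to $w \in V_k(K)$ to obtain $|w|_{1,K} \lesssim h_K^{-1}\|w\|_{0,K} \lesssim h_K^k\|v\|_{k+1,K}$, and then the triangle inequality together with the $H^1$ bound for $v - v_I$ from Lemma \ref{lm:uI} finishes the proof.

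The main technical step is the moment-by-moment estimate that passes from $\|v - v_I\|_{0,K}$ to $\|\boldsymbol{\chi}_K(w)\|_{l^2}$; the essential leverage comes from the fact that the boundary degrees of freedom of $w$ vanish identically, so the norm equivalence collapses to the interior moments alone. Once this is in place, the inverse inequality automatically transfers the optimal $L^2$ rate to the $H^1$ rate without any loss beyond the expected $h_K^{-1}$ factor.
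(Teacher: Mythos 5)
Your proposal is correct, and it shares the paper's skeleton---compare $I_Kv$ with the auxiliary interpolant $v_I$ of Lemma \ref{lm:uI} and finish by the triangle inequality---but the core estimate of the discrepancy $w = I_Kv - v_I \in V_k(K)\cap H_0^1(K)$ runs in the opposite direction with different machinery. The paper stays in $H^1$: it writes
$\|\nabla w\|_{0,K}^2 = -(\Delta w, w)_K = (\Delta w, v - v_I)_K$,
where moment preservation of $I_K$ kills the term $(\Delta w, v - I_Kv)_K$ because $\Delta w\in \mathbb P_{k-2}(K)$; it then needs only the polynomial inverse inequality (Lemma \ref{lm:invpoly}) to get $\|\Delta w\|_{0,K}\lesssim h_K^{-1}\|\nabla w\|_{0,K}$, cancels one factor, and recovers the $L^2$ bound afterwards via the Poincar\'e inequality on $H_0^1(K)$. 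You instead estimate $\|w\|_{0,K}$ first---using the same moment-preservation fact in a different guise (the interior d.o.f. of $w$ reduce to $|K|^{-1}(v-v_I,m)_K$, the boundary d.o.f. vanish identically) together with the upper bound of the norm equivalence (Theorem \ref{th:norm})---and then pass to $H^1$ by the inverse inequality for VEM functions (Theorem \ref{th:inverse}); no accuracy is lost because $w$ is already of the optimal order $h_K^{k+1}$ in $L^2$. Both routes are valid under \textbf{A2} (your moment bound tacitly uses $|K|\gtrsim h_K^2$ and $\|m\|_{0,K}\lesssim h_K$, which the paper itself uses in the lower-bound lemma of Section \ref{sec:norm}, and the number of interior moments is fixed by $k$). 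What the paper's route buys is economy: it bypasses the heavier results of Sections 3--4, invoking only Lemma \ref{lm:invpoly} and Poincar\'e. What your route buys is modularity: once the norm equivalence and the VEM inverse inequality are available, the interpolation estimate follows by pure bookkeeping of degrees of freedom---indeed, this d.o.f.-based pattern is essentially what the paper deploys in the subsequent theorem for $I_K^W v$, where the two interpolants live in different spaces and an energy argument is unavailable.
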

\begin{proof}
By the triangle inequality and error estimate on $v_I$ (cf. \eqref{vI}), it suffices to estimate $v_I - I_K v \in H_0^1(K)$ as follows:
\begin{align*}
(\nabla (v_I - I_K v), \nabla (v_I - I_K v))_K &= - (\Delta (v_I - I_K v), v_I - I_K v)_K\\
& = (\Delta (v_I - I_K v), v - v_I)_K\\
&\leq \|\Delta (v_I - I_K v)\|_{0,K}\|v - v_I \|_{0,K}\\
&\leq h_K^{-1}\|\nabla (v_I - I_K v)\|_{0,K} h_K^{k+1}\|v\|_{k+1,K}.
\end{align*}
The first step involves integration by parts and the fact $v_I - I_Kv\in H_0^1(K)$. The term $(\Delta (v_I - I_K v), v - I_Kv) = 0$ is due to $\Delta (v_I - I_K v)\in \mathbb P_{k-2}(K)$ and the moment preservation of the canonical interpolation. The last step uses the inverse inequality for $\Delta (v_I - I_K v)\in \mathbb P_{k-2}(K)$ (cf. \eqref{lm:invpoly}) and error estimate of $v - v_I$ (cf. \eqref{vI}).

Canceling one $\|\nabla (v_I - I_K v)\|_{0,K}$, we obtain the desired error estimate.
\end{proof}

Next, we present the interpolation error estimate of $ v - I_K^W v$ by comparing $I_K^Wv$ with $I_Kv$.

\begin{theorem}[Interpolation error estimate of $I_K^Wv$]\label{th:IK}
For $v\in H^{k+1}(K)$, we have the optimal order error estimate in both $L^2$ and $H^1$-norm
$$ \| v - I_K^W v\|_{0,K} + h_K \| \nabla (v - I_K^W v)\|_{0,K} \lesssim h_K^{k+1}\|v\|_{k+1,K}.$$
\end{theorem}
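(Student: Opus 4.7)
The plan is to compare $I_K^W v$ with $I_K v$ and conclude via the triangle inequality using Theorem \ref{th:IK} for the $I_K v$ part. So the task reduces to estimating $w := I_K^W v - I_K v$ in both the $L^2$- and $H^1$-norms, and proving a bound of order $h_K^{k+1}$ and $h_K^{k}$, respectively.

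First I would list the structural facts about $w$. Since $V_k(K) = V_{k,k-2}(K) \subset V_{k,k}(K)$ and $W_k(K)\subset V_{k,k}(K)$, we have $w\in V_{k,k}(K)$. Both interpolants share the d.o.f. of $V_k(K)$: vertex values, edge moments of degree $\leq k-2$, and interior moments of degree $\leq k-2$. Therefore $w|_{\partial K}=0$, $(w,p)_K = 0$ for every $p\in\mathbb{P}_{k-2}(K)$, and, by the same argument as in the proof of Corollary \ref{cor:norm}, $\Pi_k^{\nabla}I_K^W v = \Pi_k^{\nabla} I_K v$.

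Next I would invoke the norm equivalence on $V_{k,k}(K)$ (Theorem \ref{th:norm}). Because all d.o.f. shared with $V_k(K)$ vanish on $w$, only the higher-order moment d.o.f. $\boldsymbol{\chi}_K^{k}\setminus\boldsymbol{\chi}_K^{k-2}$ survive, giving
$$
\|w\|_{0,K} \;\lesssim\; h_K \sum_{q^*\in \mathbb{M}_k(K)\setminus\mathbb{M}_{k-2}(K)} |K|^{-1}|(w,q^*)_K|.
$$
For each such $q^*$, the defining property of $W_k(K)$ yields $(I_K^W v,q^*)_K = (\Pi_k^{\nabla} I_K^W v, q^*)_K = (\Pi_k^{\nabla} I_K v, q^*)_K$, so
$$
(w,q^*)_K = (\Pi_k^{\nabla} I_K v - I_K v,\, q^*)_K.
$$
I would then bound this by Cauchy-Schwarz together with $\|q^*\|_{0,K}\lesssim h_K$, the Poincar\'e-Friedrichs inequality of Lemma \ref{lm:PF}, the $H^1$-optimality of $\Pi_k^{\nabla}$ (comparison with $v_\pi$), and the triangle inequality applied with the known estimates for $v-v_\pi$ and $v-I_K v$ (Theorem \ref{th:IK}). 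This produces $\|I_K v - \Pi_k^{\nabla} I_K v\|_{0,K} \lesssim h_K^{k+1}\|v\|_{k+1,K}$ and hence $|(w,q^*)_K|\lesssim h_K^{k+2}\|v\|_{k+1,K}$. Using $|K|^{-1}\lesssim h_K^{-2}$ (which follows from assumption \textbf{A2}) and the fact that the number of $q^*$'s depends only on $k$, I obtain $\|w\|_{0,K}\lesssim h_K^{k+1}\|v\|_{k+1,K}$. The $H^1$-estimate then follows from the inverse inequality for VEM functions (Theorem \ref{th:inverse}) applied to $w\in V_{k,k}(K)$.

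The main obstacle is recognizing that even though $I_K^W v$ and $I_K v$ formally live in different VEM spaces, their difference falls into the enriched space $V_{k,k}(K)$ in which the norm equivalence from Theorem \ref{th:norm} directly applies; the extra d.o.f. from $\boldsymbol{\chi}_K^{k}\setminus\boldsymbol{\chi}_K^{k-2}$ that normally obstruct a clean estimate can be controlled precisely because of the defining constraint of $W_k(K)$, which turns these moments into moments of the readily bounded quantity $I_K v - \Pi_k^{\nabla} I_K v$.
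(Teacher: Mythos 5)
Your proof is correct, but it takes a genuinely different route from the paper's. Both arguments hinge on the same key observation --- $\boldsymbol \chi(I_K^W v) = \boldsymbol \chi(I_K v)$, hence $\Pi_k^{\nabla} I_K^W v = \Pi_k^{\nabla} I_K v$ --- and both reduce to estimating $w := I_K^W v - I_K v \in H_0^1(K)$. From there the paper works at the $H^1$ level: it applies the stabilization norm equivalence (Theorem \ref{th:VEMPi}) to each interpolant \emph{separately} in its own space, bounding $\|\nabla(I-\Pi_k^{\nabla})I_K^W v\|_{0,K}$ by $\|\boldsymbol \chi(I-\Pi_k^{\nabla})I_K^W v\|_{l^2} = \|\boldsymbol \chi(I-\Pi_k^{\nabla})I_K v\|_{l^2}$ and then converting back to $\|\nabla(I-\Pi_k^{\nabla})I_K v\|_{0,K}$, so it never needs to touch the higher moments of $w$ at all; the $L^2$ bound then comes from Poincar\'e on $H_0^1(K)$. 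You instead work at the $L^2$ level first: you place $w$ in the enriched space $V_{k,k}(K)$, apply the $L^2$--$l^2$ norm equivalence (Theorem \ref{th:norm}) there, note that all d.o.f. shared with $V_k(K)$ vanish on $w$, and control the surviving moments $\boldsymbol \chi_K^{k}\setminus\boldsymbol \chi_K^{k-2}$ via the defining constraint of $W_k(K)$, which turns $(w,q^*)_K$ into $(\Pi_k^{\nabla} I_K v - I_K v, q^*)_K$ --- precisely the mechanism the paper uses in Corollary \ref{cor:norm}; you then upgrade to $H^1$ by the inverse inequality (Theorem \ref{th:inverse}). Your chain of estimates checks out: $\|I_K v - \Pi_k^{\nabla} I_K v\|_{0,K} \lesssim h_K^{k+1}\|v\|_{k+1,K}$ via Lemma \ref{lm:PF}, the $H^1$-optimality of $\Pi_k^{\nabla}$ against $v_\pi$, and the known estimates \eqref{vII} and for $v - I_K v$; together with $\|q^*\|_{0,K}\lesssim h_K$ and $|K|^{-1}\lesssim h_K^{-2}$ (valid under \textbf{A2}) this gives $\|w\|_{0,K}\lesssim h_K^{k+1}\|v\|_{k+1,K}$ with the right rate. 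The paper's route is shorter and avoids any explicit moment computation; yours is more hands-on but makes completely transparent \emph{why} the $W_k(K)$ constraint neutralizes the extra d.o.f., and it yields the $L^2$ bound on $w$ directly rather than through Poincar\'e.
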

\begin{proof}
Again by the triangle inequality and the known error estimate for $v - I_Kv$, it suffices to estimate $I_K^W v- I_K v\in H_0^1(K)$.
   %
%
A crucial observation is that both interpolants, although in different VEM spaces, share the same d.o.f., i.e., $\boldsymbol  \chi(I_K^W v) = \boldsymbol  \chi(I_K v)$. Therefore $\Pi_k^{\nabla}I_K^W v =  \Pi_k^{\nabla} I_K v = \Pi_k^{\nabla} v.$

Using the norm equivalence (cf. Theorem \ref{th:VEMPi}), we have:
\begin{align*}
\|\nabla (I_K^W v- I_K v)\|_{0,K} &\leq \|\nabla (I-\Pi_k^{\nabla}) I_K^Wv\|_{0,K} + \|\nabla (I-\Pi_k^{\nabla}) I_Kv\|_{0,K} \\
&\lesssim  \|\boldsymbol  \chi (I - \Pi_k^{\nabla}) I_K^W v\|_{l^2} + \|\nabla (I-\Pi_k^{\nabla}) I_Kv\|_{0,K}\\
&= \|\boldsymbol  \chi (I - \Pi_k^{\nabla}) I_K v\|_{l^2} + \|\nabla (I-\Pi_k^{\nabla}) I_Kv\|_{0,K}\\
&\lesssim \|\nabla (I-\Pi_k^{\nabla}) I_Kv\|_{0,K}\\
&\lesssim \| \nabla (v - \Pi_k^{\nabla} v)\|_{0,K} + \| \nabla (v - I_K v) \|_{0,K}\\
&\lesssim \| \nabla (v - I_K v) \|_{0,K},
\end{align*}
as required.
\end{proof}

\begin{remark}\rm
Notice that we cannot apply the norm equivalence to $I_K^W v- I_K v$ directly since they are in different spaces.
Here we use the relations $\Pi_k^{\nabla}I_K^W v =  \Pi_k^{\nabla} I_K v = \Pi_k^{\nabla}v$ and $\boldsymbol  \chi(I_K^W v) = \boldsymbol  \chi(I_K v)$
as a bridge to switch the estimate for $I_K^Wv$ to that of $I_Kv$.
\end{remark}

\section{Conclusion and Future Work}
In this paper we have established the inverse inequality, norm equivalence between the norm of a finite element function and its degrees of freedom, and interpolation error estimates for several VEM spaces on a polygon which admits a virtual quasi-uniform triangulation, i.e., assumption {\bf A2}.

We note that {\bf A2} will rule out polygons with high aspect ratio. Equivalently the constant is not robust to the aspect ratio of $K$. For example, a rectangle $K$ with two sides $h_{\max}$ and $h_{\min}$. It can be decomposed into union of shape regular rectangles but the number will depend on the aspect ratio $h_{\max}/h_{\min}$. In numerical simulation, however, VEM is also robust to the aspect ratio of the elements. In a forthcoming paper, we will examine anisotropic error analysis of VEM based on certain maximum angle conditions.

We present our proofs in two dimensions but it is possible to extend the techniques to three dimensions. The outline is given as follows. Given a polyhedral region $K$, we need to assume {\bf A2} holds for each face $f\in \partial K$ and are able to prove results restricted to each face. Then we assume {\bf A2} holds for $K$ and prove results as for the 2-D case. It is our ongoing study to develop the details in this case.

\end{document}